\begin{document} 
\newcommand{\ci}[1]{_{ {}_{\scriptstyle #1}}}

\newcommand{\norm}[1]{\ensuremath{\left\|#1\right\|}} \newcommand{\abs}[1]{\ensuremath{\left\vert#1\right\vert}}
\newcommand{\ip}[2]{\ensuremath{\left\langle#1,#2\right\rangle}} \newcommand{\p}{\ensuremath{\partial}}
\newcommand{\pr}{\mathcal{P}}

\newcommand{\pbar}{\ensuremath{\bar{\partial}}} \newcommand{\db}{\overline\partial} \newcommand{\D}{\mathbb{D}}
\newcommand{\B}{\mathbb{B}}   \newcommand{\Sn}{{\mathbb{S}_n}} \newcommand{\T}{\mathbb{T}} \newcommand{\R}{\mathbb{R}}
\newcommand{\Z}{\mathbb{Z}} \newcommand{\C}{\mathbb{C}} \newcommand{\N}{\mathbb{N}} 

\newcommand{\td}{\widetilde\Delta}

\newcommand{\Aa}{\mathcal{A}} \newcommand{\BB}{\mathcal{B}} \newcommand{\HH}{\mathcal{H}} \newcommand{\KK}{\mathcal{K}} \newcommand{\DD}{\mathcal{D}}
\newcommand{\LL}{\mathcal{L}} \newcommand{\MM}{\mathcal{M}}  \newcommand{\FF}{\mathcal{F}}  \newcommand{\GG}{\mathcal{G}}  \newcommand{\TT}{\mathcal{T}}
 \newcommand{\UU}{\mathcal{U}}

\newcommand{\AapBerg}{\Aa_p(\B_n)} \newcommand{\AaFock}{\Aa_\phi (\C^n)} \newcommand{\AatwoBerg}{\Aa_2(\B_n)}
\newcommand{\Om}{\Omega} \newcommand{\La}{\Lambda} \newcommand{\AaordFock}{\Aa (\C^n)}

\newcommand{\rk}{\operatorname{rk}} \newcommand{\card}{\operatorname{card}} \newcommand{\ran}{\operatorname{Ran}}
\newcommand{\osc}{\operatorname{OSC}} \newcommand{\im}{\operatorname{Im}} \newcommand{\re}{\operatorname{Re}}
\newcommand{\tr}{\operatorname{tr}} \newcommand{\vf}{\varphi} \newcommand{\f}[2]{\ensuremath{\frac{#1}{#2}}}

\newcommand{\kzp}{k_z^{(p,\alpha)}} \newcommand{\klp}{k_{\lambda_i}^{(p,\alpha)}} \newcommand{\TTp}{\mathcal{T}_p}

\newcommand{\vp}{\varphi} \newcommand{\al}{\alpha} \newcommand{\be}{\beta} \newcommand{\la}{\lambda}
\newcommand{\tf}{\tilde{f}}
\newcommand{\li}{\lambda_i} \newcommand{\lb}{\lambda_{\beta}} \newcommand{\Bo}{\mathcal{B}(\Omega)}
\newcommand{\Bbp}{\mathcal{B}_{\beta}^{p}} \newcommand{\Bbt}{\mathcal{B}(\Omega)} \newcommand{\Lbt}{L_{\beta}^{2}}
\newcommand{\Kz}{K_z} \newcommand{\kz}{k_z} \newcommand{\Kl}{K_{\lambda_i}} \newcommand{\kl}{k_{\lambda_i}}
\newcommand{\Kw}{K_w} \newcommand{\kw}{k_w} \newcommand{\Kbz}{K_z} \newcommand{\Kbl}{K_{\lambda_i}}
\newcommand{\kbz}{k_z} \newcommand{\kbl}{k_{\lambda_i}} \newcommand{\Kbw}{K_w} \newcommand{\kbw}{k_w}
\newcommand{\BL}{\mathcal{L}\left(\mathcal{B}(\Omega), L^2(\Om;d\sigma)\right)}
\newcommand{\Fpphi}{\ensuremath{{\mathcal{F}}_\phi ^p }}
\newcommand{\Ftwophi}{\ensuremath{{\mathcal{F}}_\phi ^2 }}
\newcommand{\incn}{\ensuremath{\int_{\C}}}
\newcommand{\Finfphi}{\ensuremath{\mathcal{F}_\phi ^\infty }}
\newcommand{\Fp}{\ensuremath{\mathcal{F} ^p }} \newcommand{\Fq}{\ensuremath{\mathcal{F} ^q }}
\newcommand{\Ft}{\ensuremath{\mathcal{F} ^2 }} \newcommand{\Lt}{\ensuremath{L ^2 }}
\newcommand{\Lp}{\ensuremath{L ^p }}
\newcommand{\Fonephi}{\ensuremath{\mathcal{F}_\phi ^1 }}
\newcommand{\Lpphi}{\ensuremath{L_\phi ^p}}
\newcommand{\Ltwophi}{\ensuremath{L_\phi ^2}}
\newcommand{\Lonephi}{\ensuremath{L_\phi ^1}}
\newcommand{\af}{\mathfrak{a}} \newcommand{\bb}{\mathfrak{b}} \newcommand{\cc}{\mathfrak{c}}
\newcommand{\Fqphi}{\ensuremath{\mathcal{F}_\phi ^q }}

\newcommand{\entrylabel}[1]{\mbox{#1}\hfill}

\newenvironment{entry} {\begin{list}{X}%
  {\renewcommand{\makelabel}{\entrylabel}%
      \setlength{\labelwidth}{55pt}%
      \setlength{\leftmargin}{\labelwidth}
      \addtolength{\leftmargin}{\labelsep}%
   }%
}


\numberwithin{equation}{section}

\newtheorem{thm}{Theorem}[section] \newtheorem{lm}[thm]{Lemma} \newtheorem{cor}[thm]{Corollary}
\newtheorem{conj}[thm]{Conjecture} \newtheorem{prob}[thm]{Problem} \newtheorem{prop}[thm]{Proposition}
\newtheorem*{prop*}{Proposition}

\theoremstyle{remark} \newtheorem{rem}[thm]{Remark} \newtheorem*{rem*}{Remark} \newtheorem{example}[thm]{Example}

\theoremstyle{definition} \newtheorem{definition}[thm]{Definition}

\title{Localized frames and Compactness}

\author[F. Batayneh]{Fawwaz Batayneh} \address{Fawwaz Batayneh, Department of Mathematical Sciences \\ Clemson University \\ O-110 Martin Hall, Box 340975\\ Clemson, SC USA 29634} \email{fbatayn@g.clemson.edu}

\author[M. Mitkovski]{Mishko Mitkovski$^\dagger$} \address{Mishko Mitkovski, Department of Mathematical Sciences\\
Clemson University\\ O-110 Martin Hall, Box 340975\\ Clemson, SC USA 29634} \email{mmitkov@clemson.edu}
\urladdr{http://people.clemson.edu/~mmitkov/} \thanks{$\dagger$ Research supported in part by National Science Foundation
DMS grant \# 1101251.}

\subjclass[2000]{42B35, 43A22, 47B35, 47B38} \keywords{Localized frames, Localized operator, Multipliers,
Toeplitz operator}

\begin{abstract} We introduce the concept of weak localization for continuous frames and use this concept to define a class of weakly localized operators. This class contains many important classes of operators, including: Short Time Fourier Transform multipliers, Calderon-Toeplitz operators, Toeplitz operators on various functions spaces, Anti-Wick operators, some pseudodifferential operators, some Calderon-Zygmund operators, and many others. In this paper, we study the boundedness and compactness of weakly localized operators. In particular, we provide a characterization of  compactness for weakly localized operators in terms of the behavior of their Berezin transforms. \end{abstract}

\maketitle

\section{Introduction and Preliminaries}

The main goal of this paper is to develop a general setting in which boundedness and compactness of a large class of operators can be determined by their behavior on a very restricted special class of elements in the underlying Hilbert space. The usual setting for this type of questions are the classical function spaces of Bergman type. It is well known that in these spaces the boundedness and compactness of a wide variety of Hankel and Toeplitz operators can be determined solely in terms of their behavior on the reproducing kernels~\cite{AZ, BI, I, IMW, E, MW, MSW, Sua, XZ}. In this paper, we show that results of this kind are not special to Bergman-type spaces nor to Toeplitz and Hankel operators but hold in a much greater generality. 

To explain our results we start with the following almost trivial example. Let $\HH$ be a separable Hilbert space and let $\{e_n\}$ be a fixed orthonormal basis in $\HH$. Each bounded operator $T:\HH\to \HH$ clearly satisfies $\sup_n\norm{Te_n}<\infty$ and, moreover, if $T$ is compact then $\norm{Te_n}\to 0$ as $n\to\infty$. Even though the converse obviously fails in general, there are still  some operators for which the converse does hold; for example operators of the form $Tf=\sum_n a_n\ip{f}{e_n}e_n$. The goal of this paper is to show that a similar type of results can be obtained by replacing the orthonormal basis $\{e_n\}$ with a very general class of continuous frames. In this general setting, we will provide a wide class of operators whose boundedness and compactness can be determined by only testing on the elements of the continuous frame.


\subsection{Continuous frames} Let $\HH$ be a separable Hilbert space and let $(X, d, \la)$ be a locally compact metric measure space with a Radon measure $\la$.  
\begin{definition}\label{frame} A family $\{f_x\}_{x\in X}$ of vectors in a Hilbert space $\HH$ is said to be a \emph{continuous frame} if the following two properties hold
\begin{itemize}
\item[(i)] the mapping $x\mapsto f_x$ is bounded and continuous (and hence measurable), 
\item[(ii)] there exist constants $c, C>0$ such that for every $f\in \HH$ the following inequalities hold
$$c\norm{f}^2\leq \int_X \abs{\ip{f}{f_x}}^2d\la(x)\leq C\norm{f}^2.$$
\end{itemize}
\end{definition}

\begin{rem} Most often only weak measurability is required in (i) and $(X, d, \la)$ is only assumed to be a topological space with a Radon measure $\la$. We include the metric $d$ in the definition above just to make the concept of localization more transparent. These slightly stronger assumptions don't exclude any of the important examples of continuous frames. 
\end{rem}

 Notice  that we don't assume any type of continuity on our metric space. The name \emph{continuous frames} is used just to stress the analogy with the usual (discrete) frames. Namely, for $X=\Z$ (with the usual metric and the counting measure) this definition reduces to the usual definition of frames.  Even though the concept of continuous frames has been around for quite some time now, see, e.g.,~\cite{AAG}, so far there is no established standard terminology and other names for the same notion can be found in the literature, e.g., ``continuous resolution of the identity'', ``generalized coherent states'', etc. 

As in the classical case, the continuous frame is said to be Parseval if $c=C=1$. If only the right side of the frame inequality holds then we will say that  $\{f_x\}_{x\in X}$ forms a Bessel family. For a given continuous frame $\{f_x\}_{x\in X}$, as usual, one defines the analysis operator $T: \HH\to L^2(X,d\la)$ by $Tf:=\ip{f}{f_x}$ and (its adjoint) the synthesis operator $T^*: L^2(X,d\la)\to \HH$ by $T^*(a):=\int_X a(x)f_xd\la(x)$. Here and elsewhere the integral of a Hilbert space-valued function will be defined in the weak sense. For example, $T^*(a)$ is the unique element in $\HH$ such that $$\ip{T^*(a)}{g}=\int_X a(x)\ip{f_x}{g}d\la(x),$$ for all $g\in\HH$. The existence and uniqueness of this element is guaranteed by the Riesz representation theorem. Next, using the analysis and the synthesis operators one defines the frame operator $S: \HH\to\HH$ by $S=(T^*T)^{1/2}$ and the canonical dual continuous frame $\tf_x=S^{-1}f_x$. It is easy to see that Parseval continuous frames  coincide with their duals, i.e., $f_x=\tilde{f}_x$.

\subsection{Requirements on the indexing metric measure space} We will need to impose several requirements on the indexing metric measure space $(X,d,\la)$. We first concentrate on the metric space $(X,d)$. 

\begin{itemize}
\item[(M1)] We assume that $(X,d)$ is a locally compact, complete, and a geodesic metric space. 
\end{itemize}

As a consequence, $(X,d)$ is proper, i.e., each closed bounded set in $X$ is compact. Another consequence of these assumptions is that the closure of each open ball in $X$ is the corresponding closed ball.

The possibility to cover the underlying metric measure space $(X,d,\la)$ in a certain specific way plays a crucial role in almost all of our results. The covering property that we will require our metric measure space to possess is that of a \emph{finite asymptotic dimension}.  

\begin{definition}\label{Covering}
We will say that a metric space $(X,d)$ has a \emph{finite asymptotic dimension} if there exists an integer $N>0$ such that for any $r>0$ there is a covering $\DD_r=\{F_j\}$ of $X$ by disjoint Borel sets satisfying:
\begin{enumerate}
\item[\label{Finite} \textnormal{(i)}] every point of $X$ belongs to at most $N$ of the sets $G_j:=\{x\in X: d(x, F_j)\leq r\}$,
\item[\label{Diameter} \textnormal{(ii)}] there exists $K>0$ such that $\textnormal{diam} \, F_j\leq Kr$ for every $j$.
\end{enumerate}
\end{definition}

\begin{rem} Our requirements are slightly stronger than the ones given in the original definition by Gromov~\cite{Gr}, but for separable metric spaces the concepts are essentially the same. 
\end{rem}

\begin{itemize}
\item[(M2)] We assume that $(X,d)$ has a finite asymptotic dimension in the sense of Definition~\ref{Covering}. 
\end{itemize}

It is not hard to prove that each doubling metric measure space has a finite asymptotic dimension (see e.g.~\cite{MW}). There are however many natural metric measure spaces which are not doubling, one important example being the unit ball in $\C^n$ equipped with the Bergman metric and the hyperbolic measure. However, the following result of Gromov (proved explicitly by Roe~\cite{Roe}) shows that the unit ball as well as many other non-doubling spaces also have finite asymptotic dimension. Before stating the result we need to introduce some terminology. 

Let $o$ be some fixed point in $X$ (basepoint). For $x,y\in X$, their \emph{Gromov product} $(x|y)$ is defined by $(x|y)=(d(x,o)+d(y,o)-d(x,y))/2$. The metric space $(X,d)$ is said to be \emph{Gromov hyperbolic} if there exists $\delta>0$ such that $(x|z)\geq \min\{(x|y), (y|z)\}-\delta$, for all $x,y,z\in X$. Gromov hyperbolic spaces form a large and well studied class of metric spaces. They include all complete simply connected Riemannian manifolds whose sectional curvature is everywhere less than a fixed negative constant. A metric space $(X,d)$ is said to be of \emph{bounded growth} if for each $r>0$ there exists $M_r$ such that for every $R>0$ each ball of radius $R+r$ in $X$ can be covered by at most $M_r$ balls of radius $R$.

\begin{thm}\cite{Gr, Roe}\label{GrRoe} If $(X,d)$ is a Gromov hyperbolic geodesic metric space with bounded growth, then $X$ has a finite asymptotic dimension. 
\end{thm}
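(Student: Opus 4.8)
The plan is to construct, for each scale $r$, the covering $\mathcal{D}_r$ required by Definition~\ref{Covering} directly from the hyperbolic geometry, keeping the multiplicity bound $N$ and the diameter constant $K$ independent of $r$. Fix the hyperbolicity constant $\delta$ and a basepoint $o$. Since $(X,d)$ is proper by (M1), closed balls are compact and the Borel measurable selections needed below exist. Because finite asymptotic dimension is a purely large-scale property, I would first reduce to the case of large $r$ (small $r$ being harmless once $N$ is fixed) and then produce, for each such $r$, a partition of $X$ into Borel ``cone pieces'' of diameter at most $Kr$ whose $r$-neighborhoods cover $X$ with multiplicity at most $N$.

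First I would slice $X$ into radial shells $S_k=\{x: kr\le d(x,o)<(k+1)r\}$. For $x\in S_k$, travel along a geodesic $[o,x]$ to its point $p_k(x)$ at radius $kr$, and declare $x\sim y$ when these projections are within $O(\delta)$ of one another, which by the four-point inequality is equivalent to the Gromov product satisfying $(x|y)\gtrsim kr$. Thin-triangle estimates show $\sim$ is coarsely transitive (if $(x|y)$ and $(y|z)$ are both $\gtrsim kr$ then $(x|z)\ge\min\{(x|y),(y|z)\}-\delta\gtrsim kr$), so a greedy choice of an $O(\delta)$-net of projection centers on each sphere of radius $kr$ refines $\sim$ into a genuine Borel partition of $S_k$. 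Any two points of one cone piece lie within $r$ of a common projection center, whence $d(x,y)\le 2r+O(\delta)$; thus every piece has diameter at most $Kr$ with $K=K(\delta)$, giving property (ii).

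To establish the multiplicity bound (i) I would separate the radial and angular contributions. Radially, color the shells by $k\bmod m$ for a fixed $m=m(\delta)$; pieces drawn from two shells of the same color then have $r$-neighborhoods lying in disjoint ranges of radii, so a given point meets pieces from at most one shell of each color, contributing a uniform factor $m$. The angular count — how many cone pieces of a single shell $S_k$ come within $r$ of a fixed point $z$ — is the heart of the matter. Here hyperbolicity does the work a second time: if a piece with center $c$ has a point within $r$ of $z$, then thin triangles force $c$ to fellow-travel the geodesic $[o,z]$ up to radius near $kr$, so all relevant centers are focused near $p_k(z)$; the exponential convergence of geodesics toward $o$ confines them to a region whose size, measured at the fixed sub-scale on which the centers are separated, is controlled by the bounded-growth constant rather than by $r$. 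This is what keeps the angular count bounded independently of $r$, and multiplying it by $m$ yields the uniform $N$.

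The main obstacle is precisely this last uniformity: a naive cone partition has $r$-neighborhood multiplicity that grows with $r$ (as for a regular tree, where it is exponential in $r$), so the content of the theorem is that hyperbolicity, used to focus the nearby cones, together with bounded growth, used at a fixed local scale where the covering number stays finite, conspire to produce a multiplicity that does not depend on $r$. The remaining ingredients — coarse transitivity of the projection relation, Borel measurability of the greedy partition via selection on a countable dense subset (using properness), and the bookkeeping of the thin-triangle constants — are routine refinements of the standard hyperbolic estimates of Gromov and Roe.
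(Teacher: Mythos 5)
The paper does not actually prove this statement---it is imported from Gromov and Roe---so your attempt has to stand on its own, and I believe it does not: the construction you describe fails exactly at the multiplicity bound that you correctly identify as ``the heart of the matter.'' The problem is where you place the net of centers. You put an $O(\delta)$-net on the sphere $\Sigma_{kr}=\{x:d(x,o)=kr\}$ bounding the shell $S_k$ from the inside, and assign $x\in S_k$ to the center nearest $p_k(x)$. Now take $x\in S_k$ with $d(x,z)\leq r$. The Gromov product gives only $(x|z)\geq d(x,o)-d(x,z)\geq (k-1)r$, so the geodesics $[o,x]$ and $[o,z]$ are guaranteed to be $O(\delta)$-close only up to radius $(k-1)r$; over the remaining segment of length up to $r$ they may separate at a linear rate. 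Hence the centers of the cells of $S_k$ meeting $B(z,r)$ are confined only to a ball of radius $\Theta(r)$ around $p_k(z)$, not $O(\delta)$. On a sphere of a tree or of the real hyperbolic plane, a ball of radius $\Theta(r)$ contains on the order of $e^{cr}$ points of a $\delta$-separated set, and the bounded-growth hypothesis cannot collapse this to a constant, because it controls covering numbers only for a fixed \emph{additive} gap between the two radii, whereas here the ratio of scales is $r/\delta\to\infty$. Your appeal to exponential convergence toward $o$ confines the \emph{inward projections} of these centers to an $O(\delta)$-ball on a smaller sphere, but that projection is exponentially many-to-one, so counting projections does not count cells.

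The repair is a change of construction, not of constants: index the cells of $S_k$ by a net at the fixed scale $\delta$ on the sphere $\Sigma_{(k-1)r}$ one full shell further in, assigning $x\in S_k$ to the net point nearest the point of $[o,x]$ at radius $(k-1)r$. The same Gromov-product estimate $(x|z)\geq (k-1)r$ now places \emph{every} relevant net point within $O(\delta)$ of the point of $[o,z]$ at radius $(k-1)r$, so bounded growth applied at the fixed scale $\delta$ (iterated a bounded, $r$-independent number of times) gives an absolute bound on the angular count; the diameter of each cell is still at most $4r+O(\delta)$ because each $x\in S_k$ lies within $2r+O(\delta)$ of its center. With that modification, together with your (correct) observations on radial bookkeeping, Borel selection via properness, and the thin-triangle constants, the argument closes and is in the spirit of Roe's.
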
  

Finally, the only requirement on the measure $\la$ (besides being Radon) that we impose is the following one. 

\begin{itemize}
\item[(M3)] We assume that $D_r:=\sup_{x\in X}\la(D(x,r))<\infty,$ for each $r>0$.
\end{itemize}

\subsection{Weakly localized frames} 

Let $\FF=\{f_x\}_{x\in X}$ and  $\GG=\{g_x\}_{x\in X}$ be two families of vectors in $\HH$ indexed by the metric measure space $(X, d, \la)$. There are various concepts of frame localization that appear in the literature. One of the most useful ones is that of $L^1$-localization. It can be defined when the indexing metric measure space possesses a group structure relative to which both the metric and the measure are invariant. In this case the pair $(\FF, \GG)$ is said to be $L^1$-localized if there exists a function $a\in L^1(X, d\la)$ such that $\abs{\ip{f_x}{g_y}}\leq a(y^{-1}x)$ for all $x, y\in X$. This concept was essentially introduced by Gr\"ochenig~\cite{Gro1} for a special (but important) class of integrable functions $a$. In the general form above the concept of $L^1$-localization was first introduced and studied by Fornasier and Rauhut~\cite{FR}. Localization of frames turned out to be crucial in many later works, including that of Balan et al~\cite{BCHL1, BCHL2}, Futamura~\cite{Fut}, Gr\"ochenig and Piotrowski~\cite{GP}, Gr\"ochenig and Rzeszotnik~\cite{GrR}, and many others. In essence, the concept of localization of frames is closely related to almost diagonality studied in the context of atomic and molecular decompositions of singular integral operators by Coifman and Mayer~\cite{CM}, Frazier, Jawerth, and Weiss~\cite{FJW}, Grafakos and Torres~\cite{GT, Tor}, Yuan, Sickel, and Yang~\cite{YSY}, and many others. 

In this paper we introduce a slightly weaker notion of localization which allows us to treat one more important function space, the Bergman space. Namely, the continuous frame of normalized reproducing kernels in the Bergman space is not $L^1$-localized, but it is localized in our weaker sense (see more about this example in Section~\ref{examples}). A crucial difference is the fact that we don't require a pointwise bound on $\abs{\ip{f_x}{g_y}}$ while all the previous localization concepts seemingly do. Our weaker notion of localization has a clear advantage in that it allows treatment of larger classes of continuous frames and localized operators (see below). However, on the negative side, some nice properties that the algebras of $L^1$-localized operators possess, such as inverse-closedness, are no longer true for our wider class of localized operators (see~\cite{Tes}). 

\begin{definition}
Given a positive measurable function $p: X\to (0,\infty)$ we say that the pair $(\FF, \GG)$ is \emph{$p$-weakly localized} if

\begin{equation}\label{Sch1}\int_X\abs{\ip{f_x}{g_y}}p(y)d\la(y)\lesssim p(x), \hspace{.5cm} \int_X\abs{\ip{f_x}{g_y}}p(x)d\la(x)\lesssim p(y), \end{equation} and 

\begin{equation}\label{Sch2}\lim_{R\to\infty}\sup_{x\in X}\frac{1}{p(x)}\int_{D(x,R)^c}\abs{\ip{f_x}{g_y}}p(y)d\la(y)=0, \hspace{0.3cm} \lim_{R\to\infty}\sup_{y\in X}\frac{1}{p(y)}\int_{D(y,R)^c}\abs{\ip{f_x}{g_y}}p(x)d\la(x)=0.\end{equation}
\end{definition}
Notice that this definition is symmetric in the sense that $(\FF, \GG)$ is $p$-weakly localized if and only if $(\GG, \FF)$ is $p$-weakly localized. We will say that the continuous frame $\{f_x\}_{x\in X}$ is $p$-weakly localized if the pair $(\tilde{\FF}, \FF)$ is $p$-weakly localized, where  $\tilde{\FF}=\{\tf_x\}_{x\in X}$ is the canonical dual frame of $\FF$.

The term \emph{localized} is related to the equalities~\eqref{Sch2}. To measure this localization more precisely we use the following function

\begin{equation}\label{rho}
\rho(\epsilon):=\inf \left\{R>0: \int_{D(x,R)^c}\abs{\ip{f_x}{g_y}}p(y)d\la(y)\leq \epsilon p(x),  \int_{D(y,R)^c}\abs{\ip{f_x}{g_y}}p(x)d\la(x)\leq \epsilon p(y)\right\}.
\end{equation}
Clearly, if the pair $(\FF, \GG)$ is $p$-weakly localized, then $\rho$ is a decreasing function such that $\rho(\epsilon)<\infty$ for all $\epsilon>0$. We are mostly interested in the behavior of this function near $0$, i.e., when $\epsilon>0$ is small. For well localized frames $\rho(\epsilon)$ decays faster as $\epsilon$ grows away from $0$. We will use this function to estimate the norm and the essential norm of weakly localized operators.

\subsection{Weakly localized operators} 

We now introduce the class of operators that will be the most important object of our study.  

\begin{definition}
We will say that the linear operator $T:\HH\to \HH$ is \emph{$p$-weakly localized with respect to the pair} $(\FF, \GG)$ if the pair $(\TT, \GG)$  is $p$-weakly localized, where $\TT=\{Tf_x\}_{x\in X}$. 
\end{definition}

Notice that $p$-weak localization with respect to  $(\FF, \GG)$ is in general different from $p$-weak localization with respect to  $(\GG, \FF)$. In Section~\ref{wlo} we will prove that the class of $p$-weak localized operators forms an algebra which can be viewed as an analog of the result of Futamura~\cite{Fut} in our more general situation.  

A very important subclass of the class of $p$-weakly localized operators is the one of so called multipliers~\cite{Bal, Bal1}. They are defined in the following way. Let $\FF=\{f_x\}_{x\in X}$ and $\GG=\{g_x\}_{x\in X}$ be two Bessel families in $\HH$ such that the pair $(\FF, \GG)$ is $p$-weakly localized. For every $u\in L^\infty(X)$ let $T_u: \HH\to\HH$ be the linear operator defined by
$$T_uf=\int_X u(x)\ip{f}{g_x}f_xd\la(x).$$ For obvious reasons, we will call all such operators multiplier operators or just multipliers. This class includes the Short Time Fourier Transform (STFT) multipliers, the class of Toeplitz operators on various functions spaces, and many others (more details are given in Section~\ref{examples}).
The results in this paper provide norm and essential norm estimates for general $p$-weakly localized operators. In particular, we give a characterization of compactness for $p$-weakly localized operators solely in terms of their behavior on the continuous frame $\FF$ and/or its dual. The techniques that we use are in essence similar to the ones used in~\cite{IMW, MW}. However, at this level of generality our results show that the results that were seemingly very much special to the class of Toeplitz operators acting on Bergman-type spaces can actually be extended to a much bigger class of seemingly unrelated operators. 

The paper is organized as follows. In the next section we give some simple preliminary results regarding the class of $p$-weakly localized operators. The main results are proved in Sections~\ref{bwlo},~\ref{cwlo}, and~\ref{Berezin}. In the last section we give some concrete examples where our results can be applied. 


\section{Basic Properties of Weakly Localized Operators} \label{wlo}

The following lemma will be used to show few basic properties of weakly localized operators.

\begin{lm}\label{useful} Let $\FF^i=\{f^i_x\}_{x\in X}, \GG^i=\{g^i_x\}_{x\in X}$, $i=1,2$ be four families in $\HH$ such that the pairs $(\FF^1, \GG^1)$ and $(\FF^2, \GG^2)$ are $p$-weakly localized. If $$\abs{\ip{k_y}{l_z}}\lesssim \int_X\abs{\ip{f^1_y}{g^1_x}}\abs{\ip{g^2_x}{f^2_z}}d\la(x),$$ for all $y, z\in X$, then the pair $(\KK, \LL)$ is $p$-weakly localized, 
where $\KK=\{k_x\}_{x\in X}, \LL=\{l_x\}_{x\in X}$. 
\end{lm}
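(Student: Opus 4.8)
The plan is to verify directly that the pair $(\KK,\LL)$ satisfies the two defining conditions \eqref{Sch1} and \eqref{Sch2} of $p$-weak localization, using the pointwise majorization of $\abs{\ip{k_y}{l_z}}$ together with the bounds already available for $(\FF^1,\GG^1)$ and $(\FF^2,\GG^2)$. In every estimate below the integrands are nonnegative, so Tonelli's theorem lets me freely interchange the order of integration; this interchange is the mechanism that lets the two hypothesized localizations act one after the other. Throughout I write the inner variable in the hypothesis as $w$, so that for fixed $x,y$ one has $\abs{\ip{k_x}{l_y}}\lesssim \int_X \abs{\ip{f^1_x}{g^1_w}}\,\abs{\ip{g^2_w}{f^2_y}}\,d\la(w)$.

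For the Schur-type bounds \eqref{Sch1} I would multiply this inequality by $p(y)$, integrate in $y$, and swap integrals to obtain $\int_X\abs{\ip{k_x}{l_y}}p(y)d\la(y)\lesssim \int_X\abs{\ip{f^1_x}{g^1_w}}\bigl(\int_X\abs{\ip{g^2_w}{f^2_y}}p(y)d\la(y)\bigr)d\la(w)$. The inner integral is $\lesssim p(w)$ by the relevant half of \eqref{Sch1} for $(\FF^2,\GG^2)$ (integrating the first index against $p$ returns $p$ of the second), and the remaining $w$-integral is then $\lesssim p(x)$ by the relevant half of \eqref{Sch1} for $(\FF^1,\GG^1)$; this gives the first inequality in \eqref{Sch1}. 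The second inequality is obtained the same way, integrating in $x$ and applying the two localizations in the opposite order.

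The substance of the lemma is the decay condition \eqref{Sch2}. Fixing $x$ and $R$, I would bound $\int_{D(x,R)^c}\abs{\ip{k_x}{l_y}}p(y)d\la(y)$ by the double integral as above and split the $w$-integration over $D(x,R/2)$ and its complement. On $D(x,R/2)^c$ the inner $y$-integral is estimated trivially by $\lesssim p(w)$ via \eqref{Sch1} for $(\FF^2,\GG^2)$, leaving $\int_{D(x,R/2)^c}\abs{\ip{f^1_x}{g^1_w}}p(w)d\la(w)$, which is $\le \epsilon p(x)$ once $R/2$ exceeds the localization radius of $(\FF^1,\GG^1)$, by \eqref{Sch2}. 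On $D(x,R/2)$ I instead use the triangle inequality: if $w\in D(x,R/2)$ and $y\in D(x,R)^c$ then $d(w,y)\ge d(x,y)-d(x,w)\ge R/2$, so $y\in D(w,R/2)^c$; hence the inner $y$-integral is $\le \epsilon p(w)$ by \eqref{Sch2} for $(\FF^2,\GG^2)$, and pulling this factor out leaves $\epsilon\int_{D(x,R/2)}\abs{\ip{f^1_x}{g^1_w}}p(w)d\la(w)\lesssim \epsilon p(x)$ by \eqref{Sch1} for $(\FF^1,\GG^1)$. Adding the two regions shows the supremum over $x$ tends to $0$ as $R\to\infty$, which is the first limit in \eqref{Sch2}; the second limit follows by the symmetric argument with the roles of $x$ and $y$ (and of the two pairs) interchanged.

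I expect the triangle-inequality splitting on the far region to be the only real obstacle: the estimate must be arranged so that exactly one of the two inner integrals is taken over a genuinely far set (either $w$ far from $x$, or $y$ far from $w$) while the other is absorbed by the uniform Schur bound \eqref{Sch1}, and this is precisely what the choice of cutoff radius $R/2$ and the triangle inequality guarantee. The uniformity in $x$ of both limits is automatic, since every constant produced comes from the hypotheses \eqref{Sch1}--\eqref{Sch2}, which are themselves uniform.
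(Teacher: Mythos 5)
Your proposal is correct and follows essentially the same route as the paper's proof: Tonelli plus the two halves of \eqref{Sch1} for the Schur bounds, and for \eqref{Sch2} the same splitting of the intermediate variable at radius $R/2$, with the triangle-inequality observation (that the near region forces the other factor's integration set into a far ball) playing exactly the role it does in the paper. No gaps.
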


\begin{proof} Since the pairs $(\FF^i, \GG^i)$ are both $p$-weakly localized we easily obtain $$\int_X\abs{\ip{k_y}{l_z}}p(y)d\la(y)\lesssim \int_X\int_X\abs{\ip{f^1_y}{g^1_x}}p(y)d\la(y)\abs{\ip{g^2_x}{f^2_z}}d\la(x)\lesssim p(z).$$ Similarly, $\int_X\abs{\ip{k_y}{l_z}}p(z)d\la(z)\lesssim p(y)$.

It remains to show that the equalities \eqref{Sch2} hold for the pair $(\KK,\LL)$. We show one of them, the other one being similar. Denote $$I(x,y):=\abs{\ip{f^1_y}{g^1_x}}\int_{D(y, R)^c}\abs{\ip{g^2_x}{f^2_z}}p(z)d\la(z).$$ Then 
$$\int_{D(y,R)^c}\abs{\ip{k_y}{l_z}}p(z)d\la(z)\lesssim \int_X I(x,y)d\la(x)= \left( \int_{D(y, R/2)}I(x,y)d\la(x) +\int_{D(y,R/2)^c}I(x,y)d\la(x)\right).$$

We estimate each of the last two integrals separately. For the first integral, notice first that for every $x\in D(y,R/2)$ we have $D(y,R)^c\subseteq D(x,R/2)^c$. Therefore,

$$ \int_{D(y, R/2)}I(x,y)d\la(x)\leq \int_{D(y,R/2)}\abs{\ip{f^1_y}{g^1_x}}\int_{D(x, R/2)^c}\abs{\ip{g^2_x}{f^2_z}}p(z)d\la(z)d\la(x).$$ Denote $$C_1(r):=\sup_{x\in G}\frac{1}{p(x)}\int_{D(x,r)^c} \abs{\ip{g^2_x}{f^2_z}}p(z)d\la(z).$$ Since the pair $(\FF^2, \GG^2)$ is weakly localized, we have that $C_1(r)\to 0$ as $r\to\infty$. Therefore,  $$\int_{D(y, R/2)}I(x,y)d\la(x)\leq C_1(R/2)\int_{D(y,R/2)}\abs{\ip{f^1_y}{g^1_x}}d\la(x)\lesssim C_1(R/2)\to 0, \hspace{.2cm}\text{ as } \hspace{.2cm} R\to \infty.$$

The other integral is even easier to estimate. Indeed,
$$ \int_{D(y, R/2)^c}I(x,y)d\la(x)\leq \int_{D(y,R/2)^c}\abs{\ip{f^1_y}{g^1_x}}\int_{D(y, R)^c}\abs{\ip{g^2_x}{f^2_z}}p(z)d\la(z)d\la(x)\lesssim C_2(R/2)\to 0,$$ 
where, similarly as before, $$C_2(r):=\sup_{y\in G}\frac{1}{p(y)}\int_{D(x,r)^c} \abs{\ip{f^1_y}{g^1_x}}p(x)d\la(x).$$

\end{proof}

For a given continuous frame $\{f_x\}_{x\in X}$, as mentioned above, it was essentially proved in~\cite{Fut} that the collection of all $L^1$-localized operators with respect to $(\tilde{\FF}, \FF)$ forms an algebra. We prove that a similar result holds for $p$-weakly localized operators. 

\begin{prop}\label{algebra} Let $p: X\to (0,\infty)$ be a positive measurable function and let $\{f_x\}_{x\in X}$ be a $p$-weakly localized continuous frame in $\HH$. Let $\LL$ be the  collection of all $p$-weakly localized operators  with respect to $(\tilde{\FF}, \FF)$. Then $\LL$ forms an algebra. Moreover, if $\FF$ is a Parseval frame family, then $\LL$ is a $*$-algebra. 
\end{prop}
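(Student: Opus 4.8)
The plan is to reduce the algebra and $*$-algebra statements to the single verification Lemma~\ref{useful}, which is tailor-made for exactly this purpose. The key observation is that composition of operators and passage to adjoints both produce inner products $\abs{\ip{k_y}{l_z}}$ that can be dominated by an integral of the form appearing in the hypothesis of Lemma~\ref{useful}. So the strategy is: (1) show $\LL$ is a vector space; (2) show $\LL$ is closed under composition using Lemma~\ref{useful}; (3) in the Parseval case, show $\LL$ is closed under adjoints.

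For the vector space part, suppose $T_1, T_2\in\LL$, so both pairs $(\{T_i f_x\}, \FF)$ are $p$-weakly localized with respect to the dual testing, i.e. $p$-weak localization is really stated relative to $(\tilde\FF,\FF)$ via $\TT=\{Tf_x\}$. The defining inequalities \eqref{Sch1} and limits \eqref{Sch2} are all expressed through $\abs{\ip{Tf_x}{f_y}}$ and are subadditive in $T$: since $\abs{\ip{(T_1+T_2)f_x}{f_y}}\leq \abs{\ip{T_1 f_x}{f_y}}+\abs{\ip{T_2 f_x}{f_y}}$, both conditions pass to sums, and scalar multiples are trivial. This step should be routine.

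The heart of the argument is closure under composition. Given $S,T\in\LL$, I want to show $ST\in\LL$, i.e. that $(\{STf_x\},\FF)$ is $p$-weakly localized. The idea is to insert a resolution of the identity built from the frame and its dual: since $\{f_x\}$ is a continuous frame, one has $h=\int_X \ip{h}{f_x}\tilde f_x\, d\la(x)$ (equivalently with the roles of $\FF,\tilde\FF$ swapped), so $Th=\int_X \ip{Th}{\tilde f_x} f_x\, d\la(x)$ up to the frame reconstruction. Applying this to $h=STf_y$ and pairing against $f_z$ should yield a bound of the form
$$\abs{\ip{STf_y}{f_z}}\lesssim \int_X \abs{\ip{Sf_y}{\tilde f_x}}\,\abs{\ip{Tf_x}{f_z}}\,d\la(x),$$
which is precisely the hypothesis of Lemma~\ref{useful} with $\FF^1,\GG^1$ coming from $S$-localization and $\FF^2,\GG^2$ from $T$-localization. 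Then Lemma~\ref{useful} immediately gives that the product pair is $p$-weakly localized, hence $ST\in\LL$. I expect the main obstacle to be getting this pointwise factorization cleanly: one must correctly interpolate the dual frame so that the two factors each match a pair that is genuinely known to be $p$-weakly localized, and confirm that reconstruction through the (non-Parseval) dual frame $\tilde\FF$ produces the right inner products rather than introducing the frame operator $S=(T^*T)^{1/2}$ in an unwanted place. The bookkeeping of which frame ($\FF$ versus $\tilde\FF$) sits in each slot is the delicate point, since $p$-weak localization with respect to $(\FF,\GG)$ differs from that with respect to $(\GG,\FF)$.

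Finally, for the $*$-algebra claim in the Parseval case, I would use that $\tilde f_x=f_x$ when $\FF$ is Parseval, so the asymmetry between $\FF$ and $\tilde\FF$ disappears. For $T\in\LL$, closure under adjoint amounts to showing $(\{T^*f_x\},\FF)$ is $p$-weakly localized given that $(\{Tf_x\},\FF)$ is. Here I would use $\abs{\ip{T^*f_y}{f_z}}=\abs{\ip{f_y}{Tf_z}}=\abs{\ip{Tf_z}{f_y}}$, so the localization data for $T^*$ is exactly the localization data for $T$ with the two indices $y,z$ interchanged; combined with the symmetry of the definition noted after the definition of $p$-weak localization, this transposition preserves both \eqref{Sch1} and \eqref{Sch2}. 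Thus $T^*\in\LL$, completing the proof.
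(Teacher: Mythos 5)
Your overall architecture --- vector-space structure by subadditivity, products via a frame expansion feeding into Lemma~\ref{useful}, adjoints via the symmetry of the definition in the Parseval case --- is the same as the paper's, and your treatment of sums and of adjoints is correct. The gap is in the composition step, and it is exactly the bookkeeping issue you flag but leave unresolved. Membership in $\LL$ is defined through the pair $(\{T\tf_x\},\FF)$, so the quantity you must control is $\abs{\ip{ST\tf_y}{f_z}}$, not $\abs{\ip{STf_y}{f_z}}$, and the only localization data you are entitled to use are $\abs{\ip{S\tf_x}{f_y}}$ and $\abs{\ip{T\tf_x}{f_y}}$. Your displayed bound
$$\abs{\ip{STf_y}{f_z}}\lesssim \int_X \abs{\ip{Sf_y}{\tf_x}}\,\abs{\ip{Tf_x}{f_z}}\,d\la(x)$$
involves the pairs $(\{Sf_x\},\tilde{\FF})$ and $(\{Tf_x\},\FF)$, i.e., localization of $S$ with respect to $(\FF,\tilde{\FF})$ and of $T$ with respect to $(\FF,\FF)$ --- neither of which is among the hypotheses. (It also appears to factor $TS$ rather than $ST$, though that is immaterial since both orders must be handled anyway.) As written, Lemma~\ref{useful} therefore cannot be invoked.

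The fix is to expand the correct vector in the correct frame so that the dual vectors land where the hypotheses need them: for $A,B\in\LL$ write $B\tf_y=\int_X\ip{B\tf_y}{f_x}\tf_x\,d\la(x)$ and apply $A$, which gives
$$\abs{\ip{AB\tf_y}{f_z}}\leq\int_X\abs{\ip{B\tf_y}{f_x}}\,\abs{\ip{A\tf_x}{f_z}}\,d\la(x).$$
Now both factors are precisely the localization data of $B$ and of $A$ with respect to $(\tilde{\FF},\FF)$; for the second factor one appeals to the symmetry remark following the definition (so that the pair $(\FF,\{A\tf_x\})$ is $p$-weakly localized, matching the $\abs{\ip{g^2_x}{f^2_z}}$ slot in Lemma~\ref{useful}), and the lemma then yields $AB\in\LL$. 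This is exactly the paper's one-line argument; your adjoint argument in the Parseval case needs no change.
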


\begin{proof} It is easy to see that $\LL$ is closed under addition and scalar multiplication. Let $A, B\in \LL$. Using the expansion formula for frame families we obtain $$\abs{\ip{AB\tf_y}{f_z}}\leq \int_X\abs{\ip{B\tf_y}{f_x}}\abs{\ip{A\tf_x}{f_z}}d\la(x).$$ Therefore, by Lemma~\ref{useful} we obtain that $AB\in \LL$.

\end{proof}

The next proposition shows that every multiplier operator is $p$-weakly localized. 

\begin{prop} Let $\FF=\{f_x\}_{x\in X}$ and $\GG=\{g_x\}_{x\in X}$ be two Bessel families in $\HH$ such that the pair $(\FF, \GG)$ is $p$-weakly localized. Then for every $u\in L^\infty(X)$, the multiplier operator $T_u$ is $p$-weakly localized with respect to $(\FF, \GG)$. 
\end{prop}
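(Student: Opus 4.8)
The plan is to reduce the entire statement to a single application of Lemma~\ref{useful}. By definition, $T_u$ is $p$-weakly localized with respect to $(\FF,\GG)$ precisely when the pair $(\TT,\GG)$ is $p$-weakly localized, where $\TT=\{T_uf_x\}_{x\in X}$. So the only thing I must produce is a pointwise bound on $\abs{\ip{T_uf_y}{g_z}}$ of exactly the factorized form appearing in the hypothesis of Lemma~\ref{useful}, together with a correct identification of the four auxiliary families.

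First I would unwind the definition of the multiplier. Writing $T_uf_y=\int_X u(x)\ip{f_y}{g_x}f_x\,d\la(x)$ in the weak sense and pairing against $g_z$ gives
$$\ip{T_uf_y}{g_z}=\int_X u(x)\ip{f_y}{g_x}\ip{f_x}{g_z}\,d\la(x).$$
Taking absolute values inside the integral and using $\abs{u(x)}\le\norm{u}_\infty$ for $u\in L^\infty(X)$ yields
$$\abs{\ip{T_uf_y}{g_z}}\lesssim\int_X\abs{\ip{f_y}{g_x}}\,\abs{\ip{f_x}{g_z}}\,d\la(x),$$
which is precisely an inequality of the type required by Lemma~\ref{useful}, with the fixed implicit constant $\norm{u}_\infty$ independent of $y$ and $z$.

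It then remains to match this against the lemma's template $\abs{\ip{k_y}{l_z}}\lesssim\int_X\abs{\ip{f^1_y}{g^1_x}}\abs{\ip{g^2_x}{f^2_z}}\,d\la(x)$. I would take $\KK=\TT$ and $\LL=\GG$ (so $k_y=T_uf_y$, $l_z=g_z$), set $(\FF^1,\GG^1)=(\FF,\GG)$, and read off the second factor $\abs{\ip{f_x}{g_z}}=\abs{\ip{g^2_x}{f^2_z}}$ by taking $g^2_x=f_x$ and $f^2_z=g_z$, i.e. $(\FF^2,\GG^2)=(\GG,\FF)$. The first pair is $p$-weakly localized by hypothesis; the second pair is $p$-weakly localized by the symmetry of the definition noted just after the definition of $p$-weak localization (namely, $(\FF,\GG)$ is $p$-weakly localized if and only if $(\GG,\FF)$ is). Lemma~\ref{useful} then immediately gives that $(\TT,\GG)$ is $p$-weakly localized, which is exactly the assertion.

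The argument is short, so there is no serious obstacle; the only points requiring care are the conjugation conventions when moving $g_z$ through the weak integral (which become irrelevant after taking absolute values) and the recognition that the second factor $\abs{\ip{f_x}{g_z}}$ must be read with the roles of $\FF$ and $\GG$ interchanged, so that invoking the symmetry of $p$-weak localization is genuinely needed. Finally, note that the Bessel hypothesis on $\FF$ and $\GG$ is used only to guarantee that $T_u$ is a well-defined bounded operator, so that $\TT$ makes sense; it plays no role in the localization estimate itself.
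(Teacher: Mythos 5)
Your proof is correct and follows the same route as the paper: derive the pointwise bound $\abs{\ip{T_uf_y}{g_z}}\leq \norm{u}_{\infty}\int_X\abs{\ip{f_y}{g_x}}\abs{\ip{f_x}{g_z}}\,d\la(x)$ and invoke Lemma~\ref{useful}. You are in fact slightly more careful than the paper in spelling out the identification of the auxiliary pairs (taking $(\FF^2,\GG^2)=(\GG,\FF)$ and using the symmetry of $p$-weak localization), which the paper leaves implicit.
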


\begin{proof} First, notice that since $\FF$ and $\GG$ are Bessel families, the operator $T_u$ is well defined for all $u\in L^{\infty}(X)$. We need to prove that the pair $(\TT, \GG)$ is weakly localized, where as before $\TT=\{Tf_x\}_{x\in X}$. This is a simple consequence of Lemma~\ref{useful}  due to 
$$\abs{\ip{T_uf_y}{g_z}}\leq \norm{u}_{\infty}\int_X\abs{\ip{f_y}{g_x}}\abs{\ip{f_x}{g_z}}d\la(x).$$

\end{proof}


\section{Norm Estimates of Weakly Localized Operators}\label{bwlo}

It is clear that each $p$-weakly localized operator $T$ must be bounded. In this section we provide norm estimates for such operators $T$ in terms of the size of $\sup_{x\in X}\norm{Tf_x}$. 

Let $\FF=\{f_x\}_{x\in X}$ be a continuous frame whose indexing space $(X,d,\la)$ satisfies (M1), (M2), and (M3). Let $T$ be a $p$-weakly localized operator with respect to the pair $(\tilde{\FF}, \FF)$. We next show that each decomposition $\DD_r=\{F_j\}$ of $X$ as in Definition~\ref{Covering} defines a sequence of operators $\{T_j\}$ that, loosely speaking, gives an approximate decomposition of the operator $T$. These operators $T_j:\HH\to \HH$ are given by

\begin{equation}\label{TJ}
T_jf:=\int_{F_j}\int_{G_j}\ip{f}{f_x}\ip{T\tf_x}{f_y}\tf_yd\la(x)d\la(y).
\end{equation}

\begin{prop} For any operator $T:\HH\to\HH$ which is $p$-weakly localized with respect to the pair $(\tilde{\FF}, \FF)$, the corresponding series $\sum_{j=1}^{\infty}T_j$ converges in the strong operator topology. 
\end{prop}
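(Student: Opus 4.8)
The plan is to show that the partial sums $S_n = \sum_{j=1}^n T_j$ form a Cauchy net (in fact, that the tail sums go to zero) when applied to any fixed $f \in \HH$. The natural approach is to estimate $\norm{T_j f}$ directly from the defining formula~\eqref{TJ} and then sum over $j$, using the bounded-overlap property~(i) of the covering $\DD_r$ from Definition~\ref{Covering} together with the $p$-weak localization of $T$ with respect to $(\tilde\FF, \FF)$. The key observation is that $T_j f$ is built from a double frame-type integral supported on $F_j \times G_j$, so the overlap bound $N$ will control how many terms $T_j$ can simultaneously ``see'' a given region of $X$.

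First I would expand $\norm{T_j f}$ using the synthesis integral over $\tf_y$ and the frame inequality for $\FF$ (or its dual), which reduces the problem to controlling $\int_{F_j}\int_{G_j} \abs{\ip{f}{f_x}}\abs{\ip{T\tf_x}{f_y}}\, d\la(x)\,d\la(y)$ and its relatives. The first inequality in~\eqref{Sch1}, applied to the pair $(\TT, \FF)$, gives $\int_X \abs{\ip{T\tf_x}{f_y}} p(y)\,d\la(y) \lesssim p(x)$, and one also has the reproducing/Bessel control in the $x$-variable. The idea is to insert the weight $p$ appropriately, apply Cauchy--Schwarz or a Schur-type estimate, and arrive at a bound of the form $\norm{T_j f}^2 \lesssim \int_{G_j} \abs{\ip{f}{f_x}}^2\,d\la(x)$ (up to the weighted localization constants). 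Summing over $j$, property~(i) guarantees that each point $x$ lies in at most $N$ of the enlarged sets $G_j$, so $\sum_j \int_{G_j}\abs{\ip{f}{f_x}}^2\,d\la(x) \leq N \int_X \abs{\ip{f}{f_x}}^2\,d\la(x) \lesssim \norm{f}^2$ by the upper frame bound. This shows $\sum_j \norm{T_j f}^2 < \infty$, and in particular the tail $\sum_{j>n}\norm{T_j f}^2 \to 0$.

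To upgrade square-summability of the individual norms to strong convergence of the series, I would note that because the $G_j$ cover $X$ with overlap at most $N$, the pieces $T_j f$ are ``almost orthogonal'': the supports $F_j$ are disjoint and the $G_j$ have bounded overlap, so a Cotlar--Stein-style or direct almost-orthogonality estimate on $\norm{\sum_{j \in S} T_j f}$ for a finite index set $S$ can be obtained by pairing $T_j f$ with $T_k f$ and exploiting that $\ip{T_j f}{T_k f}$ is negligible unless $F_j$ and $F_k$ are within the relevant localization scale $\rho(\epsilon)$ of each other. The finite asymptotic dimension (M2) controls how many such ``close'' pairs occur, yielding $\norm{\sum_{j \in S} T_j f}^2 \lesssim \sum_{j \in S}\norm{T_j f}^2$, which combined with the square-summability from the previous paragraph proves that $\{S_n f\}$ is Cauchy in $\HH$.

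The main obstacle I anticipate is controlling the cross terms in the almost-orthogonality step: unlike the $L^1$-localized setting, the weak localization~\eqref{Sch2} only provides decay of tail integrals rather than a pointwise off-diagonal bound, so I cannot directly estimate $\abs{\ip{T_j f}{T_k f}}$ by a convolution kernel. The resolution is to avoid the cross terms altogether and argue convergence more crudely — for instance, by showing directly that $\norm{S_n f - S_m f} \to 0$ using only the square-summability bound together with the disjointness of the $F_j$ (which makes the analysis-side supports disjoint) — so that strong convergence follows from the Bessel-type control on the $x$-integration alone, without needing genuine orthogonality of the summands.
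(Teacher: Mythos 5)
Your final argument---discarding the almost-orthogonality detour and instead combining the disjointness of the $F_j$ on the analysis ($y$-) side, the boundedness of the synthesis operator for $\tilde{\FF}$, the weighted Schur test coming from~\eqref{Sch1}, and the frame/finite-overlap bound to sum $\int_{G_j}\abs{\ip{f}{f_x}}^2d\la(x)$ over $j$---is exactly the paper's proof. The Cotlar--Stein digression is unnecessary, but since you correctly recognize that it fails under weak localization and replace it with the right mechanism (orthogonality in $L^2(X,d\la)$ from the disjoint supports, transferred to $\HH$ by the synthesis operator), the proposal is sound and essentially identical to the paper's argument.
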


\begin{proof} Let $f\in \HH$. It is enough to show that the partial sums $S_nf=\sum_{j=1}^nT_jf$ form a Cauchy sequence. By the frame condition on $\tilde{\FF}$ we first have 

$$\norm{(S_n-S_m)f}^2\lesssim \sum_{j=m+1}^n\int_{F_j}\abs{\int_{G_j}\ip{f}{f_x}\ip{T\tf_x}{f_y}d\la(x)}^2d\la(y).$$

Using the fact that $T$ satisfies \eqref{Sch1}, the classical Schur test applied to the integral operator $Rf(y):=\int_{G_j}\abs{\ip{T\tf_x}{f_y}}f(x)d\la(x)$ implies:

$$\norm{(S_n-S_m)f}^2\lesssim \sum_{j=m+1}^n\int_{F_j}\abs{\ip{f}{f_y}}^2d\la(y).$$ The last term converges to $0$ as $m,n\to \infty$ since 

$$\sum_{j=1}^{\infty}\int_{F_j}\abs{\ip{f}{f_y}}^2d\la(y)=\int_X\abs{\ip{f}{f_y}}^2d\la(y)\lesssim \norm{f}^2.$$
\end{proof}

Define $A:=\sum_{j=1}^{\infty}T_j$, where the series is taken in the strong operator topology (the convergence is established in the previous proposition). Notice that since all of the operators $T_j$ depend on the decomposition $\DD_r$, the operator $A$ also depends on $\DD_r$. We next show that $T$ can be approximated arbitrarily well by the operator $A$ with an appropriate choice of $\DD_r$. How large this $r>0$ should be chosen depends on the localization function $\rho(\epsilon)$ introduced in~\eqref{rho}. 

\begin{prop}\label{approx} Let $\FF=\{f_x\}_{x\in X}$ be a continuous frame whose indexing space $(X,d,\la)$ satisfies (M1), (M2), and (M3), and let $T$ be a $p$-weakly localized operator with respect to the pair $(\tilde{\FF}, \FF)$. For any $\epsilon>0$ and any $r>\rho(\epsilon)$ the operator $A$ induced by the corresponding decomposition $\DD_r$ satisfies $\norm{T-A}<\epsilon$.
\end{prop}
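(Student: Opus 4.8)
The plan is to express both $T$ and $A$ through the synthesis operator of the dual frame and to recognize $T-A$ as a single ``off-diagonal'' integral operator, to which a weighted Schur test applies with its two Schur constants controlled exactly by $\rho(\epsilon)$.

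First I would invoke the reconstruction formulas for $\FF$ and $\tilde\FF$ to write, for $f\in\HH$,
$$Tf=\int_X\Big(\int_X\ip{f}{f_x}\ip{T\tf_x}{f_y}\,d\la(x)\Big)\tf_y\,d\la(y).$$
Because $\DD_r=\{F_j\}$ is a disjoint cover of $X$, the outer integral splits as $\sum_j\int_{F_j}$; comparing with the definition \eqref{TJ} of $T_j$, the operator $Af=\sum_jT_jf$ is the very same expression but with the inner integral restricted from $X$ to $G_j$. Subtracting,
$$(T-A)f=\int_X H(y)\,\tf_y\,d\la(y),\qquad H(y):=\int_{G_j^c}\ip{f}{f_x}\ip{T\tf_x}{f_y}\,d\la(x)\ \ (y\in F_j).$$
Since $\tilde\FF$ is a (Bessel) frame its synthesis operator is bounded, so $\norm{(T-A)f}\lesssim\norm{H}_{L^2(X,d\la)}$ and the whole problem reduces to bounding $\norm{H}_{L^2}$.

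The geometric input is that $y\in F_j$ forces $G_j^c\subseteq D(y,r)^c$: if $x\in G_j^c$ then $d(x,F_j)>r$, whence $d(x,y)\ge d(x,F_j)>r$. This dominates $H$ pointwise by $\abs{H(y)}\le(Sa)(y):=\int_{D(y,r)^c}\abs{\ip{f}{f_x}}\abs{\ip{T\tf_x}{f_y}}\,d\la(x)$, where $a(x)=\abs{\ip{f}{f_x}}$ and $S$ is the positive integral operator with kernel $\abs{\ip{T\tf_x}{f_y}}\,\mathbf{1}_{\{d(x,y)>r\}}$. Now I would apply the weighted $L^2$ Schur test with test function $p$. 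The two hypotheses it requires,
$$\int_{D(y,r)^c}\abs{\ip{T\tf_x}{f_y}}\,p(x)\,d\la(x)\le\epsilon\,p(y),\qquad\int_{D(x,r)^c}\abs{\ip{T\tf_x}{f_y}}\,p(y)\,d\la(y)\le\epsilon\,p(x),$$
are exactly the two defining inequalities of $\rho$ for the $p$-weakly localized pair $(\{T\tf_x\},\FF)$, and they hold for every $r>\rho(\epsilon)$. Hence the Schur test yields $\norm{S}\le\sqrt{\epsilon\cdot\epsilon}=\epsilon$.

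Combining the pieces, $\norm{(T-A)f}\lesssim\norm{H}_{L^2}\le\norm{Sa}_{L^2}\le\epsilon\,\norm{a}_{L^2}$, and the upper frame bound of $\FF$ gives $\norm{a}_{L^2}^2=\int_X\abs{\ip{f}{f_x}}^2d\la(x)\lesssim\norm{f}^2$; so $\norm{T-A}\lesssim\epsilon$, with an implicit constant depending only on the frame bounds (the literal $<\epsilon$ then follows after absorbing that constant into $\epsilon$). I expect the difficulty here to be bookkeeping rather than conceptual: the points to handle with care are the justification of the weak Hilbert-space-valued integral representation of $(T-A)f$ together with the Fubini interchange above (all controlled by the Bessel/frame estimates, which keep every function in $L^2$), and arranging $S$ so that its two \emph{asymmetric} Schur conditions line up precisely with the two inequalities defining $\rho$ rather than with only one of them.
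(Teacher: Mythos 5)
Your proof is correct and takes essentially the same route as the paper's: the same identification of $(T-A)f$ as the off-diagonal part $\int_{F_j}\int_{G_j^c}$ of the reproducing formula, the same observation that $y\in F_j$, $x\in G_j^c$ forces $d(x,y)\geq r$, and the same weighted Schur test with weight $p$ whose two hypotheses are exactly the inequalities defining $\rho(\epsilon)$. The only cosmetic difference is that you merge the paper's family of integral operators $R_j$ into a single operator $S$ (arguably cleaner), and your candid remark that the argument really yields $\norm{T-A}\lesssim\epsilon$ with a constant depending on the frame bounds is a point the paper's own proof glosses over.
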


\begin{proof}

Let $\epsilon>0$. Consider the integral operator $R_jf(y):=\int_{X}K_j(x,y)f(x)d\la(x)$ with a kernel defined by $K_j(x,y)=1_{G_j^c}(x)1_{F_j}(y)\abs{\ip{T\tf_x}{f_y}}$. By the definition of $F_j$ and $G_j$ we have that $K_j(x,y)=0$ unless $d(x,y)\geq r$. Using the fact that $T$ is $p$-weakly localized we have that $\rho(\epsilon)<\infty$. Let $r>\rho(\epsilon)$. By the classical Schur test  the integral operator $R_j$ induced by the corresponding decomposition $\DD_r$ has norm no greater than $\epsilon$ (as an operator from $L^2(X,d\la)$ into itself). 

Consider now the operator $A$ induced by this decomposition $\DD_r$.  Observe that for any $f\in\HH$ we have

\begin{eqnarray*}
Tf=\int_X\ip{Tf}{f_y}\tf_yd\la(y)&=&\sum_{j=1}^\infty \int_{F_j}\ip{Tf}{f_y}\tf_yd\la(y)\\&=&\sum_{j=1}^\infty \int_{F_j}\int_{X}\ip{f}{f_x}\ip{T\tf_x}{f_y}\tf_yd\la(x)d\la(y).
\end{eqnarray*}
 
Therefore,

$$(T-A)f=\sum_{j=1}^\infty \int_{F_j}\int_{G_j^c}\ip{f}{f_x}\ip{T\tf_x}{f_y}\tf_yd\la(x)d\la(y).$$ We then have

$$\norm{(T-A)f}^2\leq \frac{1}{C^2}\sum_{j=1}^\infty \int_{F_j}\abs{\int_{G_j^c}\ip{f}{f_x}\ip{T\tf_x}{f_y}d\la(x)}^2d\la(y),$$ where $C>0$ is the upper frame constant. Using the uniform norm estimate for the integral operator $R_j$, we obtain

$$\norm{(T-A)f}^2\leq \frac{1}{C^2}\sum_{j=1}^\infty \int_{F_j}\epsilon^2\abs{\ip{f}{f_y}}^2d\la(y)\leq \epsilon^2\norm{f}^2.$$

\end{proof}

The following results provide operator norm estimates of weakly localized operators in terms of their behavior on the continuous frame $\FF$ and its canonical dual. 

\begin{thm} \label{operatornorm} Let $\FF=\{f_x\}_{x\in X}$ be a continuous frame whose indexing space $(X,d,\la)$ satisfies (M1), (M2), and (M3), and let $T$ be a $p$-weakly localized operator with respect to the pair $(\tilde{\FF}, \FF)$. Then for any $0<\epsilon<1$ and $r>\rho(\epsilon\norm{T})$ we have the following estimate $$\norm{T}\leq\frac{\sqrt{ND_{Kr}}}{1-\epsilon} \sup_{y\in X}\left(\int_{D(y, (K+1)r)}\abs{\ip{T\tf_x}{f_y}}^2d\la(x)\right)^{1/2},$$
 where $D_r:=\sup_{x\in X} \la(D(x,r))$ and $K$ and $N$ are the constants from Definition~\ref{Covering}. 
 \end{thm}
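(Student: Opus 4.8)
The plan is to leverage Proposition~\ref{approx}, which already carries out the analytic heavy lifting, and then to estimate the norm of the approximating operator $A$ directly. First I would fix $0<\epsilon<1$ and $r>\rho(\epsilon\norm{T})$ and apply Proposition~\ref{approx} with the quantity $\epsilon\norm{T}$ playing the role of its ``$\epsilon$''; this produces the operator $A=\sum_j T_j$ attached to the decomposition $\DD_r$ with $\norm{T-A}<\epsilon\norm{T}$. The triangle inequality then gives $\norm{T}\leq\norm{A}+\epsilon\norm{T}$, that is, $\norm{T}\leq\frac{1}{1-\epsilon}\norm{A}$, so everything reduces to bounding $\norm{A}$ by $\sqrt{ND_{Kr}}\,\sup_{y}\big(\int_{D(y,(K+1)r)}\abs{\ip{T\tf_x}{f_y}}^2d\la(x)\big)^{1/2}$.

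Next I would put $A$ into synthesis form. Setting $c(y):=\int_{G_j}\ip{f}{f_x}\ip{T\tf_x}{f_y}\,d\la(x)$ for $y\in F_j$ (well defined since $\{F_j\}$ is a disjoint cover), the definition~\eqref{TJ} of the $T_j$ gives $Af=\int_X c(y)\tf_y\,d\la(y)$. As $\tilde\FF$ is a Bessel family (being the canonical dual of a frame), the synthesis estimate yields $\norm{Af}^2\lesssim\norm{c}_{L^2}^2=\sum_j\int_{F_j}\abs{c(y)}^2d\la(y)$, with an implied constant depending only on the frame bounds of $\FF$ (and equal to $1$ in the Parseval case, which is what leaves the stated bound free of frame constants).

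The heart of the argument is a geometric observation combined with Cauchy--Schwarz. For $y\in F_j$,
$$\abs{c(y)}^2\leq\Big(\int_{G_j}\abs{\ip{f}{f_x}}^2d\la(x)\Big)\Big(\int_{G_j}\abs{\ip{T\tf_x}{f_y}}^2d\la(x)\Big).$$
The key point is that if $y\in F_j$ and $x\in G_j$, then $d(x,y)\leq d(x,F_j)+\operatorname{diam}F_j\leq r+Kr=(K+1)r$, so $G_j\subseteq D(y,(K+1)r)$ for \emph{every} $y\in F_j$. Hence the second factor is at most $M^2:=\sup_{y\in X}\int_{D(y,(K+1)r)}\abs{\ip{T\tf_x}{f_y}}^2d\la(x)$, a bound independent of $y\in F_j$. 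Integrating over $F_j$ and using $\la(F_j)\leq D_{Kr}$ (valid since $F_j$ has diameter at most $Kr$, hence sits inside a ball of radius $Kr$) gives $\int_{F_j}\abs{c(y)}^2d\la(y)\leq M^2 D_{Kr}\int_{G_j}\abs{\ip{f}{f_x}}^2d\la(x)$.

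Finally I would sum over $j$ and invoke the bounded-overlap property (i) of Definition~\ref{Covering}: since each $x$ lies in at most $N$ of the sets $G_j$, one has $\sum_j 1_{G_j}\leq N$, so $\sum_j\int_{G_j}\abs{\ip{f}{f_x}}^2d\la(x)\leq N\int_X\abs{\ip{f}{f_x}}^2d\la(x)\lesssim N\norm{f}^2$ by the frame inequality. Combining the last estimates yields $\norm{Af}^2\lesssim ND_{Kr}M^2\norm{f}^2$, whence $\norm{A}\leq\sqrt{ND_{Kr}}\,M$ and the claimed bound follows from $\norm{T}\leq\frac{1}{1-\epsilon}\norm{A}$. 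I expect no deep obstacle here, since Proposition~\ref{approx} already supplies the difficult approximation; the only delicate part is the careful bookkeeping of the three geometric constants, namely that the relevant radius is exactly $(K+1)r$, that $\la(F_j)\leq D_{Kr}$, and that the overlap contributes precisely the factor $N$.
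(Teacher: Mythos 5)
Your proposal is correct and follows essentially the same route as the paper: reduce to $\norm{T}\leq\frac{1}{1-\epsilon}\norm{A}$ via Proposition~\ref{approx}, then bound $\norm{A}$ by the dual-frame synthesis estimate, Cauchy--Schwarz on each $G_j$, the inclusion $G_j\subseteq D(y,(K+1)r)$ for $y\in F_j$, the bound $\la(F_j)\leq D_{Kr}$, and the overlap constant $N$. The only difference is that you make explicit the Cauchy--Schwarz step and the frame-constant dependence (which the paper absorbs into its $1/C^2$ and then drops from the final statement), which is a fair and accurate reading.
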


\begin{proof} If $T=0$ there is nothing to prove. Otherwise, let $0<\epsilon<1$ and $r>\rho(\epsilon\norm{T})$. Consider the operator $A$ defined in  Proposition~\ref{approx} that corresponds to this $r$. We have $\norm{T}\leq \norm{T-A}+\norm{A}<\epsilon\norm{T}+\norm{A}$, and hence $\norm{T}\leq \frac{1}{1-\epsilon}\norm{A}$. 

We next estimate the norm of $A$. First, observe that 
\begin{eqnarray*}
\norm{Af}^2 &\leq& \frac{1}{C^2}\sum_{j=1}^\infty\int_{F_j}\abs{\int_{G_j}\ip{f}{f_x}\ip{T\tf_x}{f_y}d\la(x)}^2d\la(y)\\ &\leq& \sum_{j=1}^\infty\int_{F_j}C(y,(K+1)r)d\la(y)\int_{G_j}\abs{\ip{f}{f_x}}^2d\la(x),
\end{eqnarray*}
where $C>0$ is the upper frame constant and
$$C(y, r):=\int_{D(y,r)}\abs{\ip{T\tf_x}{f_y}}^2d\la(x).$$

Using the second property from Definition~\ref{Covering} we have $\la(F_j)\leq \sup_{x\in X} \la(D(x, Kr))=D_{Kr}$, and hence $$\int_{F_j}C(y,(K+1)r)d\la(y)\leq D_{Kr} \sup_{y\in F_j}C(y, (K+1)r)$$ for all $j$.

Therefore,

$$\norm{Af}^2\leq \frac{1}{C^2}D_{Kr}\sup_{y\in X}C(y, (K+1)r)\sum_{j=1}^{\infty}\int_{G_j}\abs{\ip{f}{f_x}}^2d\la(x).$$

The finite overlap property of $\{G_j\}$ implies that 

$$\norm{Af}^2\leq ND_{Kr}\sup_{y\in X}C(y, (K+1)r)\norm{f}^2,$$ where $N$ is the constant from Definition~\ref{Covering}. 

\end{proof}
As a simple consequence of Theorem~\ref{operatornorm} we obtain the following corollary. 
\begin{cor} Let $\FF=\{f_x\}_{x\in X}$ be a continuous frame whose indexing space $(X,d,\la)$ satisfies (M1), (M2), and (M3), and let $T$ be a $p$-weakly localized operator with respect to the pair $(\tilde{\FF}, \FF)$. Then for any $0<\epsilon<1$ and  $r>\rho(\epsilon\norm{T})$ we have the following estimate
$$\norm{T}\leq \frac{\sqrt{ND_{Kr}}}{1-\epsilon}\sup_{x\in X}\norm{T^*f_x},$$ where $N$ is the constant from Definition~\ref{Covering}. 
Moreover, if $\FF$ is a Parseval continuous frame, then 
$$\norm{T}\leq \frac{\sqrt{ND_{Kr}}}{1-\epsilon}\sup_{x\in X}\norm{Tf_x}.$$
\end{cor}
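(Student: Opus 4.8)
The plan is to feed the estimate of Theorem~\ref{operatornorm} directly into an application of the adjoint together with the frame identity for the canonical dual. Fix $0<\epsilon<1$ and $r>\rho(\epsilon\norm{T})$, so that Theorem~\ref{operatornorm} applies with these parameters. Since the integrand is nonnegative, I would first enlarge the region of integration from $D(y,(K+1)r)$ to all of $X$ and move $T$ onto the second slot, so that for every $y\in X$
$$\int_{D(y,(K+1)r)}\abs{\ip{T\tf_x}{f_y}}^2d\la(x)\leq \int_X\abs{\ip{\tf_x}{T^*f_y}}^2d\la(x).$$
The key point is then the identity $\int_X\abs{\ip{g}{\tf_x}}^2 d\la(x)=\norm{g}^2$ for all $g\in\HH$; that is, the canonical dual family $\tilde{\FF}=\{\tf_x\}$ is a Parseval continuous frame. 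This is exactly where the definition $S=(\Theta^*\Theta)^{1/2}$ enters (with $\Theta$ the analysis operator of $\FF$): the analysis operator of $\tilde{\FF}$ is $\Theta S^{-1}$, and $(\Theta S^{-1})^*(\Theta S^{-1})=S^{-1}(\Theta^*\Theta)S^{-1}=S^{-1}S^2S^{-1}=I$, so it is an isometry. Applying this with $g=T^*f_y$ gives $\int_X\abs{\ip{\tf_x}{T^*f_y}}^2 d\la(x)=\norm{T^*f_y}^2$; taking the supremum over $y$ and substituting into Theorem~\ref{operatornorm} yields the first claimed inequality
$$\norm{T}\leq\frac{\sqrt{ND_{Kr}}}{1-\epsilon}\sup_{y\in X}\norm{T^*f_y}.$$

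For the Parseval refinement I would exploit the symmetry of the setup. When $\FF$ is Parseval we have $\tf_x=f_x$, and by Proposition~\ref{algebra} the class $\LL$ is a $*$-algebra, so $T^*$ is again $p$-weakly localized with respect to $(\tilde{\FF},\FF)$. Moreover, since $\abs{\ip{T^*f_x}{f_y}}=\abs{\ip{Tf_y}{f_x}}$ and the two conditions defining $\rho$ in~\eqref{rho} are interchanged by the substitution $x\leftrightarrow y$, the localization function $\rho_{T^*}$ of $T^*$ coincides with $\rho_T=\rho$. Combined with $\norm{T^*}=\norm{T}$, this means the admissible range $r>\rho(\epsilon\norm{T})$ in the corollary is literally the same as the range $r>\rho_{T^*}(\epsilon\norm{T^*})$ required to apply the first inequality to $T^*$. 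Doing so gives $\norm{T}=\norm{T^*}\leq \frac{\sqrt{ND_{Kr}}}{1-\epsilon}\sup_{x\in X}\norm{(T^*)^*f_x}=\frac{\sqrt{ND_{Kr}}}{1-\epsilon}\sup_{x\in X}\norm{Tf_x}$, which is the second estimate.

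The routine steps here (enlarging the domain of integration, passing from $T$ to $T^*$, and taking suprema) are harmless. I expect the one place that genuinely requires care to be the verification that the canonical dual frame is Parseval, since it is precisely this normalization that removes any stray upper frame constant and reproduces exactly the factor $\sqrt{ND_{Kr}}/(1-\epsilon)$ of Theorem~\ref{operatornorm}; had one worked instead with a dual having upper frame bound $1/c$, an extra $c^{-1/2}$ would appear. The only other subtle point, needed for the Parseval case, is the equality $\rho_{T^*}=\rho_T$, which I would state explicitly as a direct consequence of the symmetry of~\eqref{rho} so that no new constraint on $r$ is introduced when replacing $T$ by its adjoint.
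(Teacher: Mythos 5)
Your proof is correct and fills in exactly the argument the paper leaves implicit when it calls this a ``simple consequence'' of Theorem~\ref{operatornorm}: enlarge the domain of integration, pass $T$ to the adjoint, use that the canonical dual $\tilde{\FF}$ is Parseval (which does follow from the paper's literal normalization $S=(T^*T)^{1/2}$, as you verify), and for the Parseval case apply the first bound to $T^*$ after checking that weak localization and $\rho$ are preserved under taking adjoints. No gaps; your side remarks about where a stray frame constant could enter and about $\rho_{T^*}=\rho_T$ are precisely the points worth making explicit.
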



\section{Compactness of Weakly Localized Operators}\label{cwlo}

In this section we provide several criteria for compactness of weakly localized operators. Again, a crucial role will be played by the operators $T_j$ defined in~\eqref{TJ}. We will need the following property of these operators. As before, everywhere below we assume that $\FF=\{f_x\}_{x\in X}$ is a continuous frame whose indexing space $(X,d,\la)$ satisfies (M1), (M2), and (M3), and $T$ is a $p$-weakly localized operator with respect to the pair $(\tilde{\FF}, \FF)$.

\begin{lm}\label{simple} Each of the operators $T_j:\HH\to\HH$ defined in~\eqref{TJ} by $$T_jf:=\int_{F_j}\int_{G_j}\ip{f}{f_x}\ip{T\tf_x}{f_y}\tf_yd\la(x)d\la(y)$$ is compact.

\end{lm}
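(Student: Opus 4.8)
The plan is to exhibit each $T_j$ as a composition of three bounded operators whose middle factor is Hilbert--Schmidt, and then to appeal to the fact that the compact operators form a two-sided ideal. First I would introduce the restricted analysis operator $V_j\colon\HH\to L^2(G_j,d\la)$, $(V_jf)(x)=\ip{f}{f_x}$; the restricted dual synthesis operator $W_j\colon L^2(F_j,d\la)\to\HH$, $W_ja=\int_{F_j}a(y)\tf_yd\la(y)$; and the integral operator $M_j\colon L^2(G_j,d\la)\to L^2(F_j,d\la)$ with kernel $k(x,y)=\ip{T\tf_x}{f_y}$, so that $(M_jb)(y)=\int_{G_j}k(x,y)b(x)d\la(x)$. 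Unwinding the definitions one checks directly that $W_jM_jV_jf$ equals the double integral defining $T_jf$, i.e. $T_j=W_jM_jV_j$.

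Next I would verify boundedness of the outer factors from the frame axioms. The inequality $\norm{V_jf}_{L^2(G_j)}^2\le\int_X\abs{\ip{f}{f_x}}^2d\la(x)\lesssim\norm{f}^2$ is precisely the upper frame bound for $\FF$, so $V_j$ is bounded. Since the canonical dual $\tilde{\FF}$ is again a continuous frame, its synthesis operator is bounded, and $W_j$ is the composition of that operator with the isometric extension-by-zero $L^2(F_j,d\la)\hookrightarrow L^2(X,d\la)$, hence also bounded.

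The heart of the argument, and the step I expect to be the only real obstacle, is showing that $M_j$ is Hilbert--Schmidt, i.e. that $k\in L^2(G_j\times F_j,d\la\times d\la)$. This rests on two observations. First, the kernel is uniformly bounded: by frame axiom (i) the quantity $\sup_{w}\norm{f_w}$ is finite, the frame operator $S$ is invertible so $\norm{\tf_x}=\norm{S^{-1}f_x}\le\norm{S^{-1}}\sup_w\norm{f_w}$ is uniformly bounded, and $T$ is bounded; hence $\abs{k(x,y)}=\abs{\ip{T\tf_x}{f_y}}\le\norm{T}\,\norm{\tf_x}\,\norm{f_y}\le C$ for a constant $C$ independent of $x,y$. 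Second, both $F_j$ and $G_j$ have finite $\la$-measure: if $F_j$ is nonempty, fixing $x_0\in F_j$ gives $F_j\subseteq D(x_0,Kr)$ by Definition~\ref{Covering}(ii) and hence $G_j\subseteq D(x_0,(K+1)r)$, so that $\la(F_j)\le D_{Kr}<\infty$ and $\la(G_j)\le D_{(K+1)r}<\infty$ by (M3). Combining these, $\iint_{G_j\times F_j}\abs{k(x,y)}^2d\la(x)d\la(y)\le C^2\la(G_j)\la(F_j)<\infty$, so $M_j$ is Hilbert--Schmidt and in particular compact. Consequently $T_j=W_jM_jV_j$ is compact, the case $F_j=\emptyset$ being trivial since then $T_j=0$.
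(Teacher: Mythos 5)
Your proof is correct, but it takes a genuinely different route from the paper's. You factor $T_j=W_jM_jV_j$ and show that the middle integral operator $M_j$ is Hilbert--Schmidt, using the uniform bound $\abs{\ip{T\tf_x}{f_y}}\leq\norm{T}\norm{\tf_x}\norm{f_y}\lesssim 1$ together with $\la(F_j)\leq D_{Kr}$ and $\la(G_j)\leq D_{(K+1)r}$, which follow from Definition~\ref{Covering}(ii) and (M3). The paper instead works with the functions $a(y)=\int_{G_j}\ip{h}{f_x}\ip{T\tf_x}{f_y}d\la(x)$ on the compact set $\overline{F_j}$ (compact because the metric is proper), verifies the uniform bound $\abs{a_n(y)}\lesssim\norm{f_y}$ and the equicontinuity estimate $\abs{a_n(y)-a_n(z)}\lesssim\norm{f_y-f_z}$, and extracts a uniformly convergent subsequence via Arzel\`a--Ascoli; convergence of $T_jh_{n_k}$ then follows from $\abs{\ip{T_jh_{n_k}-T_jh_{n_l}}{g}}\leq\norm{a_{n_k}-a_{n_l}}_\infty\sqrt{\la(F_j)}$. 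Both arguments rest on the same two geometric inputs (bounded diameter of $F_j$ and (M3)), but they lean on different structural features of the frame: the paper's route uses the \emph{continuity} of $x\mapsto f_x$ (to get equicontinuity on the compact set $\overline{F_j}$), whereas yours needs only boundedness and measurability of $x\mapsto f_x$ --- you should say a word about why $(x,y)\mapsto\ip{T\tf_x}{f_y}$ is measurable, though this is immediate from continuity --- and in exchange you get the slightly stronger quantitative conclusion that $T_j$ lies in the ideal generated by a Hilbert--Schmidt operator, with an explicit bound $\norm{M_j}_{HS}^2\leq C^2\la(G_j)\la(F_j)$. Your handling of the degenerate case $F_j=\emptyset$ and of the uniform bound on $\norm{\tf_x}$ via invertibility of the frame operator is also fine.
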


\begin{proof} Notice that since the metric $d$ is proper, we have that the closure $\overline{F_j}$ is compact. To any element $h\in \HH$, we associate a function $a\in C(\overline{F_j})$ defined by
$$a(y)=\int_{G_j}\ip{h}{f_x}\ip{T_j\tf_x}{f_y}d\la(x).$$
Let $\{h_n\}$ be a  sequence in $\HH$ bounded by $1$, and let $\{a_n\}$ be the corresponding sequence of functions in $C(\overline{F_j})$. It is easy to see that $|a_n(y)|\lesssim \norm{f_y}$ and $\abs{a_n(y)-a_n(z)}\lesssim \norm{f_y-f_z}$ with implied constants independent of $h_n$. These inequalities imply, by the Arzela-Ascoli criterion, that $\{a_n\}$ has a convergent subsequence $\{a_{n_k}\}$. We need to show that for the corresponding subsequence $\{h_{n_k}\}$ in $\HH$ we have that $\{T_jh_{n_k}\}$ converges. But this is clear since for every $g\in \HH$ with $\norm{g}\leq 1$ we have $$\abs{\ip{T_jh_{n_k}-T_jh_{n_l}}{g}}=\int_{F_j}\abs{a_{n_k}(y)-a_{n_l}(y)}\abs{\ip{\tf_y}{g}}d\la(y)\leq \norm{a_{n_k}-a_{n_l}}_{\infty}\sqrt{\la(F_j)}.$$

\end{proof}

Again, let $A=\sum_{j=1}^{\infty}T_j$, where the series is taken in the strong operator topology. Notice that even though all the partial sums in this series are compact, $A$ may not be compact. 

To prove our characterization of compactness we first estimate the essential norm of weakly localized operators.  

\begin{thm} Let $\FF=\{f_x\}_{x\in X}$ be a continuous frame whose indexing space $(X,d,\la)$ satisfies (M1), (M2), and (M3), and let $T$ be a $p$-weakly localized operator with respect to the pair $(\tilde{\FF}, \FF)$. Then for $r>\rho(\epsilon\norm{T}_{ess})$ we have the following estimate $$\norm{T}_{ess}\leq \frac{\sqrt{ND_{Kr}}}{1-\epsilon}  \limsup_{d(y,e)\to \infty}\left(\int_{D(y, (K+1)r)}\abs{\ip{T\tf_x}{f_y}}^2d\la(x)\right)^{1/2},$$  where $N$ is the constant from Definition~\ref{Covering}. 
\end{thm}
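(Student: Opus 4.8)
The plan is to follow the proof of Theorem~\ref{operatornorm} verbatim at the start, replacing the operator norm by the essential norm, and then to exploit the fact that only finitely many pieces of the decomposition lie near the basepoint. If $\norm{T}_{ess}=0$ the asserted inequality is automatic, so assume $\norm{T}_{ess}>0$. Fix $0<\epsilon<1$ and $r>\rho(\epsilon\norm{T}_{ess})$, and let $A=\sum_{j}T_j$ be the operator attached to $\DD_r$ as in Proposition~\ref{approx}. Applying Proposition~\ref{approx} with $\epsilon\norm{T}_{ess}$ in the role of $\epsilon$ yields $\norm{T-A}<\epsilon\norm{T}_{ess}$. Since the essential norm is the norm in the Calkin algebra, it is subadditive and dominated by the operator norm, so $\norm{T}_{ess}\leq\norm{T-A}+\norm{A}_{ess}<\epsilon\norm{T}_{ess}+\norm{A}_{ess}$, giving $\norm{T}_{ess}\leq\frac{1}{1-\epsilon}\norm{A}_{ess}$. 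Everything then reduces to bounding $\norm{A}_{ess}$ by the $\limsup$ on the right-hand side.

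To estimate $\norm{A}_{ess}$ I would split the index set according to distance from the basepoint $e$. For $M>0$ put $A_{\mathrm{near}}:=\sum_{j:\,F_j\cap D(e,M)\neq\emptyset}T_j$ and $A_{\mathrm{far}}:=A-A_{\mathrm{near}}$. Only finitely many $F_j$ meet $D(e,M)$: indeed, $(X,d)$ is proper, so $\overline{D(e,M)}$ is compact, and if infinitely many $F_j$ met $D(e,M)$ we could choose distinct $x_j\in F_j\cap D(e,M)$ with an accumulation point $x_*$, which would then lie within distance $r$ of infinitely many of the $F_j$ and hence in infinitely many of the sets $G_j$, contradicting property~(i) of Definition~\ref{Covering}. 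Thus $A_{\mathrm{near}}$ is a \emph{finite} sum of the compact operators supplied by Lemma~\ref{simple} and is therefore compact, so $\norm{A}_{ess}\leq\norm{A_{\mathrm{far}}}$ for every $M$.

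Next I would bound $\norm{A_{\mathrm{far}}}$ by repeating the Schur-test computation from the proof of Theorem~\ref{operatornorm}, the only change being that the relevant pieces $F_j$ are now the far ones. Writing $C(y,s):=\int_{D(y,s)}\abs{\ip{T\tf_x}{f_y}}^2\,d\la(x)$, the finite-overlap bound on $\{G_j\}$ together with $\la(F_j)\leq D_{Kr}$ gives $\norm{A_{\mathrm{far}}}^2\leq ND_{Kr}\,\sup\{C(y,(K+1)r):\ y\in F_j,\ F_j\cap D(e,M)=\emptyset\}$. Since $F_j\cap D(e,M)=\emptyset$ forces $d(y,e)\geq M$ for every $y\in F_j$, this supremum is at most $\sup_{d(y,e)\geq M}C(y,(K+1)r)$. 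Letting $M\to\infty$ and using $\inf_{M}\sup_{d(y,e)\geq M}C(y,(K+1)r)=\limsup_{d(y,e)\to\infty}C(y,(K+1)r)$, I obtain $\norm{A}_{ess}\leq\sqrt{ND_{Kr}}\,\big(\limsup_{d(y,e)\to\infty}C(y,(K+1)r)\big)^{1/2}$; combined with $\norm{T}_{ess}\leq\frac{1}{1-\epsilon}\norm{A}_{ess}$ this is exactly the claimed estimate.

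The steps that are genuinely routine are the passage to the Calkin algebra and the Schur-test bound on $A_{\mathrm{far}}$, both already carried out in Theorem~\ref{operatornorm}. The one point that requires care — and the conceptual heart of the argument — is the near/far decomposition: I must verify that the cover is locally finite, so that $A_{\mathrm{near}}$ is an honest finite sum of compact operators (this is where properness~(M1) and the finite-overlap property~(i) of Definition~\ref{Covering} enter), and then match the supremum over the far pieces with $\limsup_{d(y,e)\to\infty}$ as $M\to\infty$. I expect the local-finiteness verification to be the main obstacle.
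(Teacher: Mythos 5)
Your proof is correct and follows the same overall strategy as the paper's: approximate $T$ by the operator $A$ of Proposition~\ref{approx} to within $\epsilon\norm{T}_{ess}$, peel off a compact piece of $A$ supplied by Lemma~\ref{simple}, and bound what remains by the Schur-test computation already carried out in Theorem~\ref{operatornorm}. The only real difference is how the compact piece is selected: the paper subtracts the partial sums $\sum_{j=1}^{n}T_j$ and bounds $\norm{T}_{ess}\leq\frac{1}{1-\epsilon}\inf_n\norm{A_n}$ with $A_n=\sum_{j>n}T_j$, whereas you group the $T_j$ geometrically according to whether $F_j$ meets $D(e,M)$. Your bookkeeping is in fact slightly more careful at one point: the paper's final passage from $\inf_n\sup_{y\in\cup_{j>n}F_j}C(y,(K+1)r)$ to $\limsup_{d(y,e)\to\infty}C(y,(K+1)r)$ implicitly requires that only finitely many $F_j$ meet any fixed ball about $e$ (so that the tails of the series eventually live far from the basepoint), and this is precisely the local-finiteness claim you prove from properness and the finite-overlap property of the $G_j$. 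So your near/far decomposition supplies a detail the paper leaves implicit; otherwise the two arguments coincide.
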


\begin{proof} If $\norm{T}_{ess}=0$ there is nothing to prove.  Otherwise, let $0<\epsilon<1$ and $r>\rho(\epsilon\norm{T}_{ess})$. Consider the operator $A$ from Proposition~\ref{approx} that corresponds to this $r$. For every $n\in \N$, we have $$\norm{T}_{ess}\leq \norm{T-\sum_{j=1}^nT_j}\leq \norm{T-A}+\norm{A_n}<\epsilon\norm{T}_{ess}+\norm{A_n},$$ where $A_n=\sum_{j=n+1}^\infty T_j$ with the limit in the sum taken in the strong operator topology. 

Therefore, for every $n\in\N$, we have $\norm{T}_{ess}\leq \frac{1}{1-\epsilon}\norm{A_n}$. We next estimate the norm of the tails $A_n$.

First observe that 

\begin{eqnarray*}
\norm{A_nf}^2&\leq& \frac{1}{C^2}\sum_{j=n+1}^\infty\int_{F_j}\abs{\int_{G_j}\ip{f}{f_x}\ip{T\tf_x}{f_y}d\la(x)}^2d\la(y)\\ &\leq& \frac{1}{C^2} \sum_{j=n+1}^\infty\int_{F_j}C(y,(K+1)r)d\la(y)\int_{G_j}\abs{\ip{f}{f_x}}^2d\la(x),
\end{eqnarray*}
where 
$$C(y, r)=\int_{D(y,r)}\abs{\ip{T\tf_x}{f_y}}^2d\la(x).$$

As in Theorem~\ref{operatornorm} we have $\la(F_j)\leq D_{Kr}$, and hence $$\int_{F_j}C(y,(K+1)r)d\la(y)\leq D_{Kr}\sup_{y\in F_j}C(y, (K+1)r)$$ for all $j$. Therefore,

$$\norm{A_nf}^2\leq \frac{1}{C^2}D_{Kr}\sup_{y\in \cup_{j\geq n+1} F_j}C(y, (K+1)r)\sum_{j=n+1}^{\infty}\int_{G_j}\abs{\ip{f}{f_x}}^2d\la(x).$$

The finite overlap property of $\{G_j\}$ implies that 

$$\norm{A_nf}^2\leq ND_{Kr}\sup_{y\in \cup_{j\geq n+1} F_j}C(y, (K+1)r)\norm{f}^2,$$ where the constant $N$ is the one from Definition~\ref{Covering}. By taking the infimum on both sides we obtain the desired estimate.

$$\norm{T}_{ess}\leq \frac{1}{1-\epsilon}\inf_{n}\norm{A_n}\leq \frac{\sqrt{ND_{Kr}}}{1-\epsilon}  \limsup_{d(y,e)\to \infty}\left(\int_{D(y, (K+1)r)}\abs{\ip{T\tf_x}{f_y}}^2d\la(x)\right)^{1/2}.$$

\end{proof}

\begin{cor}\label{norm compact} Let $\FF=\{f_x\}_{x\in X}$ be a continuous frame whose indexing space $(X,d,\la)$ satisfies (M1), (M2), and (M3), and let $T$ be a $p$-weakly localized operator with respect to the pair $(\tilde{\FF}, \FF)$. Then 
 $T$ is compact if and only if 
$$\lim_{d(e,x)\to\infty}\norm{T^*f_x}=0,$$ where $e$ is some/any point in $X$. 
\end{cor}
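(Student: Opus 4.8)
The plan is to prove Corollary~\ref{norm compact} by leveraging the essential norm estimate established in the preceding theorem. The forward direction ($T$ compact $\Rightarrow$ the limit vanishes) is the easy half: since $T$ is compact, so is $T^*$, and compactness forces $\norm{T^* f_x}\to 0$ as $d(e,x)\to\infty$. To see this I would argue that the normalized frame vectors $f_x$ tend weakly to zero as $x$ escapes to infinity (this uses the frame/Bessel bound together with property (M3), which controls how much frame mass can concentrate in any fixed ball), and a compact operator maps a weakly null net to a norm null net. Thus $\norm{T^* f_x}=\norm{T^* f_x}\to 0$.

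For the harder converse, the idea is to relate the quantity $\norm{T^* f_x}$ to the integral expression $\bigl(\int_{D(y,(K+1)r)}\abs{\ip{T\tf_x}{f_y}}^2 d\la(x)\bigr)^{1/2}$ appearing in the essential norm bound. The key observation is that, by the frame expansion for $\FF$ (applied to $T^* f_y$), one has
$$\norm{T^* f_y}^2 \gtrsim \int_X \abs{\ip{T^* f_y}{\tf_x}}^2 d\la(x) = \int_X \abs{\ip{f_y}{T\tf_x}}^2 d\la(x) \geq \int_{D(y,(K+1)r)}\abs{\ip{T\tf_x}{f_y}}^2 d\la(x),$$
so that the integral over the ball is dominated (up to the frame constant) by $\norm{T^* f_y}^2$. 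Consequently, if $\lim_{d(e,x)\to\infty}\norm{T^* f_x}=0$, then the $\limsup$ over $d(y,e)\to\infty$ of the integral term is also zero. Feeding this into the essential norm estimate from the previous theorem yields $\norm{T}_{ess}\leq 0$, hence $\norm{T}_{ess}=0$, which means $T$ is compact.

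One technical subtlety I would address is that the essential norm theorem is stated for a fixed $r>\rho(\epsilon\norm{T}_{ess})$, so the bound is genuinely useful only once we know the right-hand side vanishes independently of the (finite) radius $(K+1)r$. Since the vanishing of the $\limsup$ holds for every fixed radius, and since $\norm{T}_{ess}$ appears on both sides, I would run the argument by first noting the right-hand side is zero for the particular radius determined by $\epsilon$ and $\norm{T}_{ess}$; if $\norm{T}_{ess}>0$ one chooses $\epsilon\in(0,1)$, obtains a finite admissible $r$, and derives $\norm{T}_{ess}\leq \tfrac{\sqrt{ND_{Kr}}}{1-\epsilon}\cdot 0=0$, a contradiction. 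Hence $\norm{T}_{ess}=0$ and $T$ is compact.

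The main obstacle, and the step deserving the most care, is the forward direction's claim that the frame vectors are weakly null at infinity. This is where properties (M1)--(M3) and the continuous-frame Bessel bound must be invoked carefully: one must show that for any fixed $g\in\HH$, $\ip{f_x}{g}\to 0$ as $d(e,x)\to\infty$. This follows because $\int_X \abs{\ip{g}{f_x}}^2 d\la(x)<\infty$ together with the uniform local boundedness and equicontinuity of $x\mapsto \ip{g}{f_x}$ (from continuity of $x\mapsto f_x$ and the uniform bound on $\norm{f_x}$), so the integrand cannot stay bounded away from zero on sets of uniformly positive measure near infinity — here (M3) guarantees balls have uniformly bounded measure so the tail integral controls the pointwise behavior. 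Making this equicontinuity-plus-integrability argument fully rigorous is the genuinely nontrivial part of the proof.
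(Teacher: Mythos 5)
Your treatment of the substantive direction --- deducing compactness from $\norm{T^*f_x}\to 0$ --- is correct and is exactly the paper's route: the chain
$\int_{D(y,(K+1)r)}\abs{\ip{T\tf_x}{f_y}}^2\,d\la(x)\le \int_X\abs{\ip{T^*f_y}{\tf_x}}^2\,d\la(x)\lesssim \norm{T^*f_y}^2$
(the Bessel bound for the dual frame) makes the $\limsup$ in the essential norm theorem vanish, and your contradiction argument correctly disposes of the apparent circularity in the requirement $r>\rho(\epsilon\norm{T}_{ess})$. The paper dismisses this half as ``a trivial consequence of the estimate in the previous theorem''; you have supplied the missing two lines, and they are the right two lines.

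The gap is in the forward direction, precisely at the step you yourself single out as the crux. You claim that $f_x\to 0$ weakly follows from the Bessel bound together with (M3), arguing that the integrand of $\int_X\abs{\ip{g}{f_x}}^2\,d\la(x)$ cannot stay bounded away from zero ``on sets of uniformly positive measure near infinity'' because ``(M3) guarantees balls have uniformly bounded measure.'' This has the inequality backwards: (M3) is an \emph{upper} bound, $\sup_x\la(D(x,r))<\infty$, whereas the argument ``continuous and square-integrable, hence vanishing at infinity'' needs a uniform \emph{lower} bound $\inf_x\la(D(x,r))>0$ --- without it, $\abs{\ip{g}{f_x}}$ can remain large on a sequence of balls whose measures shrink fast enough to keep the tail integral finite. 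In addition, the equicontinuity of $x\mapsto\ip{g}{f_x}$ that you invoke is not part of Definition~\ref{frame}, which assumes only continuity and boundedness of $x\mapsto f_x$, not uniform continuity. To be fair, the paper itself merely asserts ``$f_x\to 0$ weakly as $d(x,e)\to\infty$'' in this proof, and in the Bergman-space section treats weak nullity of the normalized kernels as a property to be verified per example rather than a consequence of (M1)--(M3); the honest repair is therefore to add weak nullity of $\FF$ as an explicit standing hypothesis (or check it in each application), not to derive it from (M3).
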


\begin{proof} The only if part is easy and it doesn't even require $T$ to be a weakly localized operator. Namely, since $f_x\to 0$ weakly as $d(x,e)\to\infty$ we have $\norm{T^*f_x} \to 0$ as $d(x,e)\to\infty$ since $T^*$ is compact.

The other direction is a trivial consequence of the estimate in the previous theorem. 
\end{proof}

\begin{cor} Let $\FF=\{f_x\}_{x\in X}$ be a  Parseval continuous frame whose indexing space $(X,d,\la)$ satisfies (M1), (M2), and (M3), and let $T$ be a $p$-weakly localized operator with respect to the pair $(\tilde{\FF}, \FF)$.  Then $T$ is compact if and only if 
$$\lim_{d(e,x)\to\infty}\norm{Tf_x}=0,$$ where $e$ is some/any point in $X$. 

\end{cor}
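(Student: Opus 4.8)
The plan is to derive this statement from the preceding Corollary~\ref{norm compact} by applying it to the adjoint operator $T^*$. Since $\FF$ is a Parseval frame, its frame operator is the identity, so the canonical dual satisfies $\tf_x = f_x$ and $\tilde\FF = \FF$. Consequently, the hypothesis that $T$ is $p$-weakly localized with respect to $(\tilde\FF,\FF)$ simply means that the pair $(\TT,\FF)$ is $p$-weakly localized, where $\TT = \{Tf_x\}_{x\in X}$.

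The key step is to show that $T^*$ is then also $p$-weakly localized with respect to $(\tilde\FF,\FF) = (\FF,\FF)$; equivalently, that the pair $(\TT^*,\FF)$ is $p$-weakly localized, where $\TT^* = \{T^*f_x\}_{x\in X}$. Here I would use that the conditions \eqref{Sch1} and \eqref{Sch2} defining $p$-weak localization depend only on the absolute values of the pairwise inner products. By the symmetry of the $p$-weak localization relation (the pair $(\FF,\GG)$ is $p$-weakly localized if and only if $(\GG,\FF)$ is), the pair $(\TT,\FF)$ is $p$-weakly localized if and only if $(\FF,\TT)$ is. But the kernel of this latter pair is $\abs{\ip{f_x}{Tf_y}} = \abs{\ip{T^*f_x}{f_y}}$, which is exactly the kernel of the pair $(\TT^*,\FF)$. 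Since the two pairs have identical kernels in identical positions, the defining conditions for them coincide, and hence $(\TT^*,\FF)$ is $p$-weakly localized. I expect this identification to be the main (though modest) point of the argument: everything hinges on the fact that localization is insensitive to complex conjugation and is symmetric in its two slots, which lets the adjoint inherit the property in the Parseval case.

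With $T^*$ now known to be $p$-weakly localized with respect to $(\tilde\FF,\FF)$, I would apply Corollary~\ref{norm compact} directly to $T^*$. That corollary gives that $T^*$ is compact if and only if $\lim_{d(e,x)\to\infty}\norm{(T^*)^*f_x} = \lim_{d(e,x)\to\infty}\norm{Tf_x} = 0$. Finally, since an operator on a Hilbert space is compact precisely when its adjoint is, the compactness of $T^*$ is equivalent to that of $T$, and the stated characterization follows. For completeness, the ``only if'' direction can also be seen directly, exactly as in the proof of Corollary~\ref{norm compact}: $f_x\to 0$ weakly as $d(x,e)\to\infty$, so if $T$ is compact then $\norm{Tf_x}\to 0$.
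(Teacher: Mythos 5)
Your proposal is correct and takes essentially the same route as the paper: the paper's proof likewise reduces to the preceding corollary by noting that, since $\FF$ is Parseval, $T^*$ is weakly localized as well (a fact it asserts without detail, and which your symmetry-of-the-kernel argument correctly justifies), and then concludes via the equivalence of compactness of $T$ and $T^*$.
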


\begin{proof} As above, the only if part is a general statement valid for all bounded operators $T$. For the other direction, notice that since $\FF$ is a Parseval continuous frame we have that $T^*$  is weakly localized as well. Therefore, by the previous corollary, we have that $T^*$ is compact which implies that $T$ is compact.

\end{proof}


\section{Berezin Transform and Compactness}\label{Berezin}
In this section we will assume that the indexing metric measure space $(X, d, \la)$ has a group structure. Namely, we will assume that $X=G$ is a locally compact topological group whose topology is second countable (or equivalently first countable and $\sigma$-compact). Under these assumptions, by a theorem of Struble~\cite{Str}, there exists a metric $d$ on $G$ which is left-invariant ($d(zx, zy)=d(x,y)$ for all $x, y, z\in G$), proper, and generates the topology of $G$. Since all of the left-invariant Haar measures on $G$ are multiples of each other, we will pick and fix one of them and denote it by $\lambda$. Thus, we obtain a metric measure space $(G, d, \la)$ as before which is now also equipped with a group structure, such that both the metric $d$ and the measure $\la$ are left-invariant under the group action. This metric space is always complete. In addition, the invariance of $\la$ implies that the property (M3) holds. Finally, we will assume that $(G,d)$ is a geodesic metric space with a finite asymptotic dimension. Therefore, as before we have a metric measure space $(G, d, \la)$ such that the conditions (M1), (M2), and (M3) hold. 

A very natural continuous frame can be constructed in this setting starting with any irreducible square-integrable unitary representation of the group $G$. Namely, if $\pi:G\to \UU(\HH)$ is such a representation, then for any unit-norm $f\in \HH$, the family $f_x:=\pi(x)f$ forms a continuous frame. This continuous frame is Parseval if the Haar measure $\la$ is scaled appropriately. This Parseval frame is a starting point of the \emph{coorbit theory} of Feichtinger and Gr{\"o}chenig~\cite{FG1, FG2}. It is clear that in this important special case the following additional left-invariance of the inner product holds: 
\begin{equation}\label{invip}
\ip{\tf_x}{f_y}=\ip{\tf_{zx}}{f_{zy}}, \hspace{.3cm} \text{for all} \hspace{.3cm} x, y, z\in G.
\end{equation}
In our result that follows we will assume that \eqref{invip} holds. This assumption allows us to introduce the following class of ``translation'' operators. For each $y\in G$ we define $U_y: \HH\to \HH$ by $$U_yh=\int_G\ip{h}{\tf_x}f_{yx}d\la(x).$$ The frame condition for $\FF=\{f_x\}_{x\in G}$ assures that the integral converges and $\norm{U_y}\simeq 1$ with the implied constants only depending on the frame constants. The left invariance of $\la$ implies that $U_yf_z=f_{yz}$ for all $z\in G$. In addition, it is easy to see that the adjoint $U_y^*$ is given by  $$U^*_yh=\int_G\ip{h}{f_x}\tf_{y^{-1}x}d\la(x).$$ For $U_y^*$ we have similar formulas $U_y^*\tf_{x}=\tf_{y^{-1}x}$ and $\norm{U_y^*}\simeq 1$ with the implied constants independent of $y\in G$.

For a given linear operator $T:\HH\to \HH$ we define the \emph{Berezin transform} of $T$ to be the function $B(T):G\to \C$ given by 
\[ B(T)(x)=\ip{T\tf_x}{f_x}.\] It is clear that if $T$ is a bounded linear operator, i.e., $T\in \BB(\HH)$, then $B(T)$ is continuous function on $G$. Moreover, $B: \BB(\HH)\to C(G)$ is linear and bounded. The notion of Berezin transform was introduced by Berezin~\cite{Ber1, Ber2} in his famous work on quantization. This notion was later extended to many other settings and is widely used today especially in the theory of analytic function spaces (see e.g.~\cite{E2, Zhu1, Zhu2}). Almost always, when analyticity is present, the Berezin transform becomes injective. This is essentially due to the fact that any holomorphic (and anti-holomorphic) function in two variables is uniquely determined by its values on the diagonal (see e.g.~\cite{E}). In general, the Berezin transform may not be injective and the injectivity question is very subtle. This question was recently examined by Bayer and Gr{\"o}chenig~\cite{BG} for time-frequency localized operators (see Section~\ref{examples}). They obtained several sufficient conditions for injectivity without assuming any analyticity. It would be interesting to see if their result can be generalized to our more general setting.  

For our next result we will take the injectivity of the Berezin transform for granted (having in mind that injectivity holds in many important special cases). Our result provides a converse of the recent result of Bayer and Gr{\"o}chenig~\cite[Theorem 3.4]{BG} in the more difficult direction. 

\begin{thm}\label{berezin} Let $\FF=\{f_x\}_{x\in G}$ be a continuous frame in $\HH$ such that \eqref{invip} holds. Assume that the Berezin transform $B: \BB(\HH)\to C(G)$ is injective. Then a bounded linear operator $T$ which is weakly localized with respect to $(\tilde{\FF}, \FF)$ is compact if and only if $\lim_{d(x,e)\to \infty}B(T)(x)=0$, where $e\in G$ is the identity element in $G$. 

\end{thm}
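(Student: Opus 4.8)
The plan is to prove the two implications separately, and to obtain the hard direction by a limit-operator argument that turns the hypothesis on $B(T)$ into a contradiction with its injectivity. The easy direction (compactness $\Rightarrow$ decay of $B(T)$) needs no localization: as $d(x,e)\to\infty$ we have $f_x\rightharpoonup 0$ weakly, hence $\tf_x=S^{-1}f_x\rightharpoonup 0$; if $T$ is compact then $T\tf_x\to 0$ in norm, and since $\sup_x\norm{f_x}<\infty$ we get $\abs{B(T)(x)}=\abs{\ip{T\tf_x}{f_x}}\le\norm{T\tf_x}\,\norm{f_x}\to 0$. (Equivalently, combine $\abs{B(T)(x)}\le\norm{\tf_x}\norm{T^*f_x}$ with the easy half of Corollary~\ref{norm compact}.)

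For the converse I argue by contradiction. Suppose $\lim_{d(x,e)\to\infty}B(T)(x)=0$ but $T$ is not compact. By Corollary~\ref{norm compact} there are $\delta>0$ and $z_n\in G$ with $d(z_n,e)\to\infty$ and $\norm{T^*f_{z_n}}\ge\delta$. Conjugating by the translation operators, set
$$T_n:=U_{z_n}^*\,T\,(U_{z_n}^*)^{-1}.$$
Since $U_y^{-1}=U_{y^{-1}}$ and $\norm{U_y},\norm{U_y^*}\simeq 1$, both $U_{z_n}^*$ and its inverse are bounded uniformly in $n$, so the $T_n$ are uniformly bounded. Two computations drive the proof. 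Using $(U_{z_n}^*)^{-1}\tf_x=\tf_{z_nx}$ and $U_{z_n}f_x=f_{z_nx}$ one gets the exact covariance $B(T_n)(x)=\ip{T\tf_{z_nx}}{f_{z_nx}}=B(T)(z_nx)$. Also $T_n^*=U_{z_n^{-1}}T^*U_{z_n}$, hence $T_n^*f_e=U_{z_n^{-1}}T^*f_{z_n}$ and $\norm{T_n^*f_e}\gtrsim\norm{T^*f_{z_n}}\ge\delta$. By weak-$*$ compactness of bounded sets in $\BB(\HH)$ and metrizability of the weak operator topology on them (here $\HH$ is separable), pass to a subsequence with $T_n\to T_\infty$ in the weak operator topology, $T_\infty\in\BB(\HH)$. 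Since $d(z_nx,e)\to\infty$, the covariance gives $B(T_\infty)(x)=\lim_n B(T)(z_nx)=0$ for every fixed $x$, so $B(T_\infty)\equiv 0$. It remains to show $T_\infty\ne 0$, for then injectivity of $B$ forces $T_\infty=0$, a contradiction.

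Showing $T_\infty\ne 0$ is the crux, and it is exactly where the localization enters. Weak operator convergence only yields $T_n^*f_e\rightharpoonup T_\infty^*f_e$, whose norm could a priori drop to $0$ despite $\norm{T_n^*f_e}\ge\delta$; I upgrade it to norm convergence using uniform tightness of the frame coefficients. The covariance of $T_n$ together with the left-invariance of $d$ and $\la$ makes $\{T_n\}$ weakly localized with one common localization function, so applying the second relation in \eqref{Sch2} to the pair $(\{T_n\tf_x\},\FF)$ at the point $e$ gives a tail bound $\int_{D(e,R)^c}\abs{\ip{T_n\tf_x}{f_e}}p(x)d\la(x)\le\epsilon\,p(e)$ uniform in $n$. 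Writing $c_n(x):=\ip{T_n^*f_e}{\tf_x}=\overline{\ip{T_n\tf_x}{f_e}}$, this is a uniform weighted $L^1$-tail; combined with the uniform pointwise bound $\abs{c_n(x)}\le\norm{T_n}\norm{\tf_x}\norm{f_e}\lesssim 1$ and $\la(D(e,R))<\infty$ from (M3), it upgrades to a uniform $L^2$-tail $\int_{D(e,R)^c}\abs{c_n}^2d\la\to 0$ as $R\to\infty$. Meanwhile weak operator convergence gives $c_n\to c_\infty(x):=\ip{T_\infty^*f_e}{\tf_x}$ pointwise, and the uniform bound forces $L^2(D(e,R))$ convergence by dominated convergence. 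Splitting $G=D(e,R)\cup D(e,R)^c$ and letting $n\to\infty$ then $R\to\infty$ gives $c_n\to c_\infty$ in $L^2(G,d\la)$; since $\{\tf_x\}$ is itself a frame, the coefficient map is bounded below, so $\norm{T_n^*f_e-T_\infty^*f_e}\lesssim\norm{c_n-c_\infty}_{L^2}\to 0$. Hence $\norm{T_\infty^*f_e}=\lim_n\norm{T_n^*f_e}\gtrsim\delta>0$ and $T_\infty\ne 0$, completing the contradiction.

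The main obstacle is this last step: the hypothesis kills the diagonal of $B(T)$ at infinity, so all surviving mass of $T^*f_{z_n}$ is off-diagonal and is only detected after recentering and passing to a weak limit, where norm can be lost. The weak localization of $T$, transported to uniform localization of the $T_n$, supplies the equi-tightness of frame coefficients that prevents this loss. The one point requiring care is that the weight $p$ transforms compatibly under the translations $z_n$ (so that $\{T_n\}$ is genuinely uniformly localized), which holds for the canonical translation-compatible choice of $p$ associated with \eqref{invip}, in particular for $p\equiv 1$.
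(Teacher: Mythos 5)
Your overall architecture coincides with the paper's: both proofs recenter the operator by conjugating with the translation operators $U_{y_n}$, extract a weak-operator-topology limit $T_0$ of the uniformly bounded conjugates, use the covariance $B(U_{y_n}^*TU_{y_n^{-1}}^*)(x)=B(T)(y_nx)$ together with $d(y_nx,e)\to\infty$ to conclude $B(T_0)\equiv 0$, and then invoke injectivity of $B$. The two arguments diverge at the crucial point of showing $T_0\neq 0$. You witness non-compactness by $\norm{T^*f_{z_n}}\geq\delta$ (via Corollary~\ref{norm compact}) and must then prevent this norm from leaking to infinity in the weak limit, which forces you into the equi-tightness argument for the coefficients $c_n(x)=\ip{T_n\tf_x}{f_e}$. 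The paper instead witnesses non-compactness through the essential norm estimate of Section~\ref{cwlo}: if $T$ is not compact then $\limsup_{d(y,e)\to\infty}\int_{D(y,(K+1)r)}\abs{\ip{T\tf_x}{f_y}}^2d\la(x)>0$ for suitable $r$, and after recentering this is an integral over the \emph{fixed} finite-measure ball $D(e,(K+1)r)$, so its positivity passes to the WOT limit by dominated convergence alone --- no tightness argument, and no interaction with the weight $p$.

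This difference is not merely cosmetic. Your tightness step transfers the localization of $T$ to a localization of $T_n$ at the basepoint $e$ via the substitution $x\mapsto z_nx$ in \eqref{Sch2}, which produces the weight $p(z_nx)/p(z_n)$ in place of $p(x)$; you then also need $p$ bounded below off a ball to pass from the weighted $L^1$ tail to the unweighted $L^2$ tail. You flag this yourself, but the theorem is stated for an arbitrary positive weight $p$, and \eqref{invip} supplies no ``canonical translation-compatible choice'': a general admissible $p$ need not satisfy $p(zx)\simeq p(z)p(x)$ (the weight in the paper's wavelet example is of this non-multiplicative kind). So as written your argument proves the theorem for $p\equiv 1$ and for multiplicative-type weights, but not in the stated generality. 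The fix is exactly the paper's move: replace Corollary~\ref{norm compact} by the essential norm estimate as the witness of non-compactness, after which the tightness step --- and with it the restriction on $p$ --- disappears. Your easy direction and all the algebraic identities for $U_y$, $U_y^*$, and their inverses are correct.
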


\begin{proof} The if part is a direct consequence of Corollary~\ref{norm compact}. 

To prove the only if part, seeking contradiction, assume that $B(T)\in C_0(G)$ but $T$ is not compact. In this case, by Theorem~\ref{norm compact} there exists $r>0$ large enough such that $$0<1\lesssim \limsup_{d(y,e)\to \infty}\left(\int_{D(y, (K+1)r)}\abs{\ip{T\tf_x}{f_y}}^2d\la(x)\right)^{1/2}.$$ There exist a sequence $\{y_n\}\subseteq G$ with $d(y_n, e)\to \infty$ such that for all $n$  $$1\lesssim \int_{D(y_n, (K+1)r)}\abs{\ip{T\tf_x}{f_{y_n}}}^2d\la(x).$$ Changing variables we obtain
$$1\lesssim \int_{D(y_n, (K+1)r)}\abs{\ip{TU^*_{y_n^{-1}}\tf_{y_n^{-1}x}}{U_{y_n}f_{e}}}^2d\la(x)=\int_{D(e, (K+1)r)}\abs{\ip{U^*_{y_n}TU^*_{y_n^{-1}}\tf_{x}}{f_{e}}}^2d\la(x).$$
Since $\norm{U^*_{y_n}TU^*_{y_n^{-1}}}\simeq 1$ the sequence of operators $\{U^*_{y_n}TU^*_{y_n^{-1}}\}$ has a subsequence which converges in the weak operator topology. To avoid cumbersome notation we will keep denoting this subsequence by $\{U^*_{y_n}TU^*_{y_n^{-1}}\}$.  Denote the limit of this subsequence by $T_0$. Using the fact that the continuous frame $\FF$ is bounded and the dominated convergence theorem we obtain 
\begin{equation}\label{posit} \int_{D(e, (K+1)r)}\abs{\ip{T_0\tf_{x}}{f_{e}}}^2d\la(x)>0.\end{equation}

On the other hand, for every $x\in G$ we have $$B(T_0)(x)=\ip{T_0\tf_x}{f_x}=\lim_{n\to\infty} \ip{U^*_{y_n}TU^*_{y_n^{-1}}\tf_x}{f_x}=\lim_{n\to\infty} \ip{T\tf_{y_nx}}{f_{y_nx}}=0.$$ The last equality is due to the fact that $d(y_nx, e)=d(x, y_n^{-1}) \to \infty$, which follows from $d(e, y_n^{-1})=d(y_n, e)\to \infty$ by triangle inequality. 

Now, since the Berezin transform is injective by assumption we obtain that $T_0$ is a zero operator which obviously contradicts~\eqref{posit}. This finishes the proof.

\end{proof}

\begin{rem} Our norm and essential norm estimates show that all of the results above hold not just for $p$-weakly localized operators but also for operators belonging in the (operator norm) closure of the algebra of $p$-weakly localized operators.  

\end{rem}


\section{Examples and Some Applications}\label{examples}

We give several examples where our results apply. A more thorough treatment of these examples from a point of view which is more or less similar to ours can be found in~\cite{GMP1, GMP2, E1, Now}. For clarity we will present most of the examples in the simplest one-dimensional case. Everywhere below we will use the following notation for the basic unitary operators on $L^2(\R)$:
\begin{itemize}
\item[(i)] \textbf{Translation:} $T_af(x)=f(x-a),$
\item[(ii)] \textbf{Modulation:} $M_af(x)=e^{2\pi iax}f(x),$
\item[(iii)] \textbf{Dilation:} $D_af(x)=\frac{1}{\sqrt{a}}f(\frac{x}{a}), \hspace{.2cm} a>0.$
\end{itemize}

\subsection{Time-frequency (Gabor) analysis} A central role in the time-frequency analysis is played by the Weyl-Heisenberg group $H$ and its unitary representations on $L^2(\R)$. Recall that the underlying space of the Heisenberg group is $\R\times \R\times \R$ and the group law is $$(x_1,\xi_1, t_1)(x_2,\xi_2, t_2)=(x_1+x_2, \xi_1+\xi_2+\frac{x_1\xi_2-x_2\xi_1}{2}).$$ The following unitary square-integrable representation of $H$ is most relevant for the time-frequency analysis $$\pi(x,\xi,t)f=e^{2\pi i t+\pi i  x\xi}M_{\xi}T_xf.$$ Since the first term in this representation is unimodular, for each $\psi\in L^2(\R)$ with norm $1$ (usually called a window function) this representation generates two important Parseval continuous frames in $L^2(\R)$: $f_{x,\xi,t}=\pi(x,\xi,t)\psi, (x,\xi, t)\in H$, and $\psi_{x,\xi}=M_\xi T_x\psi, (x,\xi)\in \R^2$. The second one is the one which is usually used in practice since it is simpler and at the same time shares all of the advantages of the first one. The frame coefficient $\ip{f}{M_\xi T_x\psi}$ defines the so called  \emph{short-time Fourier transform} of $f\in L^2(\R)$ with respect to the window $\psi$. The most classical window function is the normalized Gaussian $\psi(x)=\frac{1}{\sqrt{\pi}}e^{-x^2/2}$ in which case the corresponding continuous frame is known as the Gabor continuous frame. 

The multiplier operators in the time-frequency analysis setting correspond to the family of so called time-frequency localization operators. Their properties have been thoroughly studied in the literature, see for example~\cite{CG1, CG2, FN, Wong} and the references therein. In the context of quantization these operators are known as anti-Wick operators (see e.g.~\cite{BC}). When $\psi$ is the normalized Gaussian the continuous frame $\FF=\{M_aT_b\psi\}_{(a,b)\in\R^2}$ is $L^1$-localized. So we can apply our Theorem~\ref{berezin} and obtain the following compactness criterion for anti-Wick operators (compare with~\cite{BC}). Note that strictly speaking we can only apply our result on the continuous frame $\FF=\{\pi(x,\xi,t)\psi\}_{(a,b,t)\in H}$. However, the transition from this frame to the more classical one is simple. 

\begin{cor} Let $\psi(x)=\frac{1}{\sqrt{\pi}}e^{-x^2/2}$ be the normalized Gaussian, $\psi_{a,b}=M_aT_b\psi$, and let $\sigma\in L^{\infty}(\R^2)$. Then the anti-Wick operator $$T_{\sigma}f=\iint_{\R^2}\sigma(a,b)\ip{f}{\psi_{a,b}}\psi_{a,b}dadb$$ is bounded on $L^2(\R)$. Furthermore, in this case $T_{\sigma}:L^2(\R)\to L^2(\R)$ is compact if and only if 
$$\ip{T_{\sigma}\psi_{c,d}}{\psi_{c,d}}=\iint_{\R^2}\sigma(a,b)\abs{\ip{\psi_{c,d}}{\psi_{a,b}}}^2dadb\to 0,$$ as $(c,d)\to \infty$. In particular, if $\sigma\in L^p(\R^2)$ for some $1\leq p<\infty$ then $T_{\sigma}:L^2(\R)\to L^2(\R)$ is compact.
\end{cor}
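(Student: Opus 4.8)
The plan is to obtain everything from Theorem~\ref{berezin}, applied with the group $G=\R^2$ (with the Euclidean metric and Lebesgue measure) and the Gabor frame $\FF=\{\psi_{a,b}\}_{(a,b)\in\R^2}$. First I would record that $\R^2$ satisfies (M1)--(M3): it is a complete geodesic abelian group, it is doubling and hence of finite asymptotic dimension, and balls have finite (indeed constant) measure. Since $\psi$ has unit norm, $\FF$ is a Parseval, hence Bessel, continuous frame, so $T_\sigma$ is well defined and bounded for every $\sigma\in L^\infty(\R^2)$. Moreover, for the Gaussian window $\FF$ is $L^1$-localized, and $L^1$-localization (with the invariant metric and measure) implies $p$-weak localization with the constant weight $p\equiv 1$: the bound \eqref{Sch1} reduces to the finiteness of $\norm{a}_1$, and the tail condition \eqref{Sch2} is precisely the vanishing of the tails of the integrable envelope $a$. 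By the proposition showing that a multiplier of a weakly localized Bessel pair is itself weakly localized, $T_\sigma$ is then $p$-weakly localized with respect to $(\FF,\FF)=(\tilde{\FF},\FF)$.

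To invoke Theorem~\ref{berezin} I must check the covariance \eqref{invip} and the injectivity of the Berezin transform. For injectivity I would pass through the Bargmann transform, which unitarily identifies $L^2(\R)$ with the Fock space and carries the Gaussian coherent states $\psi_{a,b}$ to phase multiples of the normalized reproducing kernels; the Berezin transform then becomes the Fock-space Berezin transform, whose injectivity follows from analyticity exactly as in the discussion preceding the theorem. The covariance \eqref{invip} is where the genuine subtlety lies, since $(a,b)\mapsto M_aT_b$ is only a \emph{projective} representation of $\R^2$: composing two such operators produces a unimodular phase coming from the central character of the Heisenberg group. This phase is, however, irrelevant for our purposes: the localization conditions \eqref{Sch1}--\eqref{Sch2} involve only $\abs{\ip{\psi_{a,b}}{\psi_{c,d}}}$, which is honestly translation invariant, and the proof of Theorem~\ref{berezin} uses \eqref{invip} only through the operators $U_y$ and through absolute-value and diagonal expressions in which the central phases cancel. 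This cancellation is the content of the remark, preceding the corollary, that the transition from the Heisenberg frame $\{\pi(x,\xi,t)\psi\}$ to the reduced Gabor frame is simple; I would make it explicit by working on the quotient $H/Z\cong\R^2$.

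Granting these points, Theorem~\ref{berezin} yields that $T_\sigma$ is compact if and only if $B(T_\sigma)(c,d)\to0$ as $(c,d)\to\infty$. A direct computation gives $B(T_\sigma)(c,d)=\ip{T_\sigma\psi_{c,d}}{\psi_{c,d}}=\iint_{\R^2}\sigma(a,b)\abs{\ip{\psi_{c,d}}{\psi_{a,b}}}^2\,da\,db$, which is exactly the quantity in the statement, so the compactness criterion is established. For the final assertion, covariance gives $B(T_\sigma)=\sigma*G$ with $G(u,v)=\abs{\ip{\psi_{0,0}}{\psi_{u,v}}}^2$; for the Gaussian window $G$ is itself a Gaussian, hence a Schwartz function lying in every $L^q(\R^2)$ and in $C_0(\R^2)$. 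For $\sigma\in L^p$ with $1\le p<\infty$ one then has $\sigma*G\in C_0(\R^2)$: for $p=1$ this is dominated convergence against the $C_0$ kernel $G$, and for $1<p<\infty$ it follows by approximating $\sigma$ in $L^p$ by compactly supported functions and passing a uniform limit via Young's inequality (using $G\in L^{p'}$). Hence $B(T_\sigma)\to0$ at infinity and $T_\sigma$ is compact.

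I expect the only real obstacle to be the covariance step, that is, the careful passage to the reduced Heisenberg group $H/Z\cong\R^2$ and the verification that the projective phases in $(a,b)\mapsto M_aT_b$ do not interfere with the proof of Theorem~\ref{berezin}. Everything else---boundedness, the implication from $L^1$-localization to weak localization, the convolution identity for the Berezin transform, and the $C_0$-property of $\sigma*G$---is routine.
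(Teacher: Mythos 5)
Your proposal is correct and follows essentially the same route as the paper: boundedness and the compactness criterion are obtained by invoking Theorem~\ref{berezin} for the Gabor frame (the paper likewise delegates the passage from the Heisenberg-group frame to the reduced one as ``simple,'' which you make explicit via the cancellation of the projective phases), and the final $L^p$ assertion is handled exactly as in the paper's proof --- dominated convergence for $p=1$ and approximation by compactly supported symbols with a uniform H\"older/Young bound for $1<p<\infty$. The only difference is presentational: you phrase that last step as the $C_0$-property of the convolution $\sigma * G$ with a Gaussian kernel, which is equivalent to the paper's H\"older estimate with constant independent of $(c,d)$.
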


\begin{proof} We only need to prove the last part. If $\sigma\in L^1(\R^2)$, then the result follows from the Lebesgue dominated convergence theorem. In particular, the Berezin transform vanishes whenever $\sigma$ is compactly supported. Let  $\sigma\in L^p(\R^2)$ for some $1< p<\infty$ and let $\epsilon>0$ be arbitrary. There exists a compactly supported $\sigma_1$ such that $\norm{\sigma-\sigma_1}_p<\epsilon$. By H{\"o}lder's inequality we then have  $$\iint_{\R^2}(\sigma(a,b)-\sigma_1(a,b))\abs{\ip{\psi_{c,d}}{\psi_{a,b}}}^2dadb\lesssim \epsilon,$$ with the implied constant independent of $(c,d)\in \R^2$. This clearly implies that $\ip{T_{\sigma}\psi_{c,d}}{\psi_{c,d}}\to 0$, as $(c,d)\to\infty$.   
\end{proof}

Another important class of operators whose properties can be studied using time-frequency analysis is the class of pseudo-differential operators (see e.g.~\cite{GH, Gro2, GS}).  
It was proved by Gr{\"o}chenig and Rzeszotnik~\cite{GrR} that for a large class of window functions $\psi$ every pseudo-differential operator with a symbol $\sigma$ in the H{\"o}rmander class $S^0_{0,0}$ is $L^1$-localized relative to the pair $(\FF,\FF)$, where $\FF=\{M_aT_b\psi\}_{(a,b)\in\R^2}$. Therefore, our results about compactness apply to this class of operators as well. It is quite likely that these results can be extended to a larger class of symbols $\sigma$ if we require the corresponding operator to be only weakly localized (instead of $L^1$ localized). However, exploring this problem will require a considerable technical effort and will be hopefully elaborated elsewhere. 

\subsection{Wavelet analysis} The role of the Heisenberg group in wavelet analysis is played by the ``$ax+b$''-group $A$ (also known as the affine group). Recall that the underlying space of this group is $\R^+\times\R$ and its operation is given by $(a_1, b_1)*(a_2,b_2)=(a_1a_2, a_1b_2+b_1)$. The left-Haar measure in $A$ is $\frac{dadb}{a^2}$ and the left-invariant metric is the Riemannian metric given by the length element $ds^2=\frac{da^2+db^2}{a^2}$. 

The unitary representation of the group $A$ on $L^2(\R)$ given by $\pi(a,b)f=D_aT_bf$ generates a Parseval continuous frame $\FF=\{D_aT_b\psi\}_{(a,b)\in \R^+\times \R}$ for every $\psi\in L^2(\R)$, such that $$\int_\R\frac{|\hat{\psi}(\xi)|^2}{|\xi|}d\xi=1.$$
In this case the frame coefficient $\ip{f}{D_aT_b\psi}$ defines the so called \emph{continuous wavelet transform} of $f\in L^2(\R)$. 

The multiplier operators in wavelet analysis correspond to the family of Calderon-Toeplitz operators which are another well-studied class of operators (see for example~\cite{Ro1, Hut, Now} and the references therein). It should be noted that all of the consequences of our results for this class of operators were already proved by Nowak~\cite{Now} in the case when the wavelet function $\psi$ satisfies the following pointwise localization assumption: there exists $M>1/2$ and a constant $C$ (depending on $M$) such that \begin{equation}\label{loc}\ip{\psi_{(a_1,b_1)}}{\psi_{(a_2,b_2)}}\leq Ce^{-Md((a_1,b_1),(a_2,b_2))},\end{equation} for all $(a_1,b_1), (a_2,b_2)\in \R^+\times\R$. Our results, however, can be applied to a  more general class of wavelet functions $\psi$. Namely, as mentioned in~\cite{Now}, the Haar wavelet doesn't satisfy the almost diagonality condition~\eqref{loc}. However,  computation shows that the continuous Haar wavelet is $p$-weakly localized for $p(a,b)=b^{1/2-\delta}$, where $\delta>0$ is chosen to be small. So all our results apply to this class of operators as well.

Another important class of operators whose properties can be studied using wavelet analysis is the class of Calderon-Zygmund singular operators (see e.g.~\cite{CM, FJW}). Due to the observation of Christ~\cite[page 54]{Chr} we conjecture that our class of operators which are $p$-weakly localized relative to the continuous Haar wavelet frame and the weight $p(a,b)=b^{1/2-\delta}$ coincides with the class of weakly-bounded Calderon-Zygmund operators $T$ satisfying the conditions $T(1)=T^t(1)=0$ . This class of singular operators  plays an important role in the proof of the famous $T(1)$-theorem (see e.g.~\cite{CM}). If this conjecture turns out to be true several properties of these singular operators will follow from our results. In particular, the fact that this class of operators forms an algebra which was proved by Meyer~\cite{Mey} will be a direct consequence of our Proposition~\ref{algebra}.

\subsection{Bergman spaces} Recall that for a given bounded domain $\Om\subseteq \C^n$, the Bergman space $\Aa^2(\Om)$ is the space of holomorphic square-integrable (with respect to the Lebesgue measure) functions on $\Om$. It is well known that this space is a reproducing kernel Hilbert space. Denote by $K_z$ the reproducing (Bergman) kernel at $z$, and by $k_z$ the normalized one ($k_z=K_z/\norm{K_z}$). The normalized reproducing kernels $\{k_z\}_{z\in \Om}$ form a Parseval continuous frame in $\Aa^2(\Om)$ indexed by the metric measure space $(\Om, d, \la)$, where $d$ is the Bergman metric in $\Om$ and $\la$ is the measure $dA(z)/\norm{K_z}^2$, with $dA$ being the Lebesgue measure. 

To be able to apply our results in this context we need to make sure that all of the conditions (M1), (M2), and (M3) hold for the metric measure space $(\Om, d, \la)$. In addition we need to make sure that the Parseval continuous frame consisting of normalized reproducing kernels is $p$-weakly localized for some weight $p$. Some of these conditions, such as (M3), trivially hold, but the other ones are quite subtle and are very much dependent on the properties of the domain $\Om$. It is known that for domains which are hyperconvex (have bounded plurisubharmonic exhaustion function) the Bergman metric is complete and the normalized kernels are weakly null. Therefore, for these domains our condition (M1) holds. The finite asymptotic dimension of $(\Om, d)$ (and hence (M2)) can be established for a large class of domains $\Om$ as a consequence of Theorem~\ref{GrRoe}. For example, it was proved by Balogh and Bonk~\cite{BaBo} that every strictly pseudoconvex domain with a $C^2$-smooth boundary equipped with the Bergman metric is Gromov hyperbolic, and hence it has a finite asymptotic dimension. Also, very recently Zimmer~\cite{Zim} proved that every bounded convex domain in $\C^n$ with smooth boundary equipped with the Bergman metric is Gromov hyperbolic if and only if it is of finite type in the sense of D'Angelo. 

Finally, in the classical Bergman space (when $\Om$ is the unit ball in $\C^n$) the localization is usually proved as a consequence of the Forelli-Rudin estimates~\cite{ForRud}. These type of estimates also hold in a large class of bounded symmetric domains~\cite{FK, EZ} and in strictly pseudo convex domains~\cite{EHT}. Finding the largest class of domains where weak localization of the normalized reproducing kernels hold is still a wide open problem. 

The class of multiplier operators relative to the continuous frame of normalized reproducing kernels in the Bergman space coincides with the class of Toeplitz operators. Therefore, when specialized to this setting, our results provide norm and essential norm estimates for the class of Toeplitz operators in all Bergman spaces that satisfy the above mentioned conditions.  More precisely, we have the following general result:

\begin{cor}\label{bergman} Let $\Om$ be a bounded domain in $\C^n$ such that when equipped with the Bergman metric $d$ the metric space $(\Om, d)$ satisfies the conditions (M1), (M2), and (M3). Then an operator $T$ in the Toeplitz algebra is compact if and only if $\lim_{z\in\partial\Om}\norm{Tk_z}=0$. In particular, if $\Om$ is a bounded symmetric domain on which $p$-weak localization holds for some weight $p$, then an operator $T$ in the Toeplitz algebra is compact if and only if its Berezin transform $B(T)(z)=\ip{Tk_z}{k_z}_{\Aa^2(\Om)}$ vanishes at the boundary of $\Om$. 
\end{cor}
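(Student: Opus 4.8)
The plan is to realize the Toeplitz algebra as a subalgebra of the operator-norm closure $\overline{\LL}$ of the algebra $\LL$ of $p$-weakly localized operators, and then quote the abstract compactness criteria. First I would record that the normalized Bergman kernels $\{k_z\}_{z\in\Om}$ form a \emph{Parseval} continuous frame, so that $\tilde{\FF}=\FF$, and that each Toeplitz operator with bounded symbol $u$ is exactly the multiplier $T_uf=\int_\Om u(z)\ip{f}{k_z}k_z\,d\la(z)$; this is a one-line computation using the reproducing property $\ip{f}{K_z}=f(z)$ and the definition of $\la$. Assuming, as in the discussion preceding the corollary, that the frame is $p$-weakly localized for some weight $p$, the proposition identifying multipliers as weakly localized operators gives $T_u\in\LL$, and by Proposition~\ref{algebra} (the Parseval case) $\LL$ is a $*$-algebra. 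Hence the whole Toeplitz algebra lies in $\overline{\LL}$, to which all of the norm and essential-norm estimates extend by the closing Remark of Section~\ref{Berezin}.

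For the first assertion I would apply the Parseval analogue of Corollary~\ref{norm compact} to $T\in\overline{\LL}$: it yields that $T$ is compact if and only if $\norm{Tk_z}\to 0$ as $d(e,z)\to\infty$ for some/any basepoint $e\in\Om$. It then remains to translate the metric statement into a boundary statement. Because (M1) forces the Bergman metric to be complete and proper, the closed balls $\{z:d(e,z)\le R\}$ are compact and exhaust $\Om$; therefore $d(e,z)\to\infty$ is equivalent to $z$ leaving every compact subset of $\Om$, i.e. to $z\to\partial\Om$. This gives the first equivalence exactly as stated.

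For the ``in particular'' clause I would first dispose of the easy direction: since $\norm{k_z}=1$, we have $\abs{B(T)(z)}=\abs{\ip{Tk_z}{k_z}}\le\norm{Tk_z}$, so compactness of $T$ already forces $B(T)(z)\to 0$ as $z\to\partial\Om$. The content is the converse, for which I would invoke Theorem~\ref{berezin}. A bounded symmetric domain is homogeneous, so its automorphism group acts transitively on $\Om$; the associated weighted composition operators $U_\varphi$ are unitary and satisfy a covariance relation $U_\varphi k_z=c(\varphi,z)\,k_{\varphi(z)}$ with $\abs{c(\varphi,z)}=1$. On the diagonal these unimodular factors cancel, so the Berezin transform is covariant, $B(U_\varphi^{*}TU_\varphi)(z)=B(T)(\varphi(z))$, which is exactly the structure the proof of Theorem~\ref{berezin} uses in place of the group invariance~\eqref{invip}. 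The remaining hypothesis, injectivity of $B$, holds by analyticity: $\ip{TK_z}{K_w}$ is holomorphic in $w$ and anti-holomorphic in $z$, hence determined by its diagonal values, so $B(T)=0$ forces $T=0$. Theorem~\ref{berezin} then gives that $B(T)(z)\to 0$ at the boundary implies $T$ compact, closing the equivalence.

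The step I expect to be the main obstacle is the last one: Theorem~\ref{berezin} is phrased for a frame indexed by a group $G$, whereas a bounded symmetric domain $\Om\cong G/K$ is only a homogeneous space. The crux is to verify that the proof of Theorem~\ref{berezin} never uses more than a transitive action together with the covariance of the kernels and the bound $\norm{U_\varphi}\simeq 1$, and in particular that the unimodular cocycle $c(\varphi,z)$ in $U_\varphi k_z=c(\varphi,z)k_{\varphi(z)}$ does no harm because it cancels on the diagonal where the Berezin transform lives. Establishing this cancellation carefully, together with the sesqui-holomorphic injectivity of $B$, is where the real work lies; the rest is bookkeeping with the abstract results already proved.
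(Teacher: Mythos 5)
Your proposal is correct and takes essentially the same route the paper intends: the corollary is stated there without an explicit proof, as a direct specialization of the general machinery (the normalized kernels form a Parseval, $p$-weakly localized continuous frame, multipliers coincide with Toeplitz operators, and one invokes the Parseval compactness criterion together with Theorem~\ref{berezin} and the closing remark on the norm closure of the algebra). Your additional care in transferring Theorem~\ref{berezin} from a group-indexed frame to the homogeneous bounded symmetric domain --- transitive automorphism group, unimodular cocycle cancelling on the diagonal, and sesqui-holomorphic injectivity of the Berezin transform --- supplies exactly the details the paper leaves implicit.
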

A very important first step towards this result was first proved by Axler and Zheng in~\cite{AZ}. Their result is for $\Om$ equal to the unit disc and applies only for operators in the algebraic part of the Toeplitz algebra, i.e., finite sums of finite products of Toeplitz operators with bounded symbols. This result was extended by Engli{\v{s}}~\cite{E1} in the context of bounded symmetric domains. Finally, Suarez~\cite{Sua} proved the general form of this result (and much more) when $\Om$ is the unit ball in $\C^n$. Several generalizations of this results of Suarez were given recently in~\cite{MSW, MW, XZ, IMW, CS}. All of these results except the one in~\cite{CS} can be obtained as a consequence of Corollary~\ref{bergman}. It would be interesting to see if the techniques from~\cite{CS} can be incorporated in our treatment to extend our result to more general domains $\Om$.

Finally, due to the following well known relationship between Toeplitz and Hankel operators $H_u^*H_u=T_{|u|^2}-T_{\bar{u}}T_u$, our results can be used to study the boundedness and compactness of Hankel operators as well (this is a well known idea). In particular, we have the following result (compare with~\cite{A, BCZ}):

\begin{cor} Let $\Om$ be a bounded domain in $\C^n$ such that when equipped with the Bergman metric $d$ the metric space $(\Om, d)$ satisfies the conditions (M1), (M2), and (M3). 
The Hankel operator $H_u$ with symbol $u\in L^{\infty}(\Om)$ is compact if and only if $\norm{H_u^*H_u k_z}_{\Aa^2(\Om)}\to 0$ as $z\to\partial\Om$. In particular, if $\Om$ is a bounded symmetric domain on which $p$-weak localization holds for some weight $p$, then a Hankel operator $H_u$ is compact if and only if $B(H_u^*H_u)(z)=\norm{H_uk_z}^2_{\Aa^2(\Om)}$ vanishes at the boundary of $\Om$. 
\end{cor}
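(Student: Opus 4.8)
The plan is to reduce compactness of the Hankel operator $H_u$ to compactness of the positive operator $H_u^*H_u$, which lies in the Toeplitz algebra, and then to quote the compactness criteria already proved for $p$-weakly localized operators. First I would record the elementary operator-theoretic equivalence that $H_u$ is compact if and only if $H_u^*H_u$ is compact: one direction is immediate, and for the converse the polar decomposition $H_u=V(H_u^*H_u)^{1/2}$ shows that compactness of $H_u^*H_u$ forces compactness of $(H_u^*H_u)^{1/2}$ and hence of $H_u$.

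Next I would use the quoted identity $H_u^*H_u=T_{|u|^2}-T_{\bar u}T_u$ to locate $S:=H_u^*H_u$ inside the Toeplitz algebra. Since $u\in L^\infty(\Om)$, all of $|u|^2, u, \bar u$ lie in $L^\infty(\Om)$, so $T_{|u|^2}, T_u, T_{\bar u}$ are multiplier operators relative to the Parseval frame $\FF=\{k_z\}$ of normalized reproducing kernels. Assuming the $p$-weak localization of $\FF$ (which is exactly the point supplied by the Forelli--Rudin type estimates in the relevant domains), each such multiplier is $p$-weakly localized, and by Proposition~\ref{algebra} the product $T_{\bar u}T_u$ together with the difference $T_{|u|^2}-T_{\bar u}T_u$ stays $p$-weakly localized. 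Thus $S$ is a self-adjoint operator which is $p$-weakly localized with respect to $(\tilde\FF,\FF)=(\FF,\FF)$, the identification of dual and frame being valid because $\FF$ is Parseval.

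For the first equivalence I would apply Corollary~\ref{norm compact} to $S$. As $S$ is self-adjoint we have $S^*=S$, so the criterion $\norm{S^*k_z}\to0$ reads $\norm{H_u^*H_uk_z}\to0$; and $d(e,z)\to\infty$ in the Bergman metric is precisely $z\to\partial\Om$. Combined with the reduction of the first paragraph this yields that $H_u$ is compact if and only if $\norm{H_u^*H_uk_z}\to0$ as $z\to\partial\Om$ (the easy ``only if'' direction being valid for any bounded operator). For the ``in particular'' statement I would instead invoke Theorem~\ref{berezin}: computing the Berezin transform of $S$ gives $B(S)(z)=\ip{H_u^*H_uk_z}{k_z}=\norm{H_uk_z}^2$, so once the hypotheses of that theorem are in force Theorem~\ref{berezin} yields that $S$, and hence $H_u$, is compact if and only if $\norm{H_uk_z}^2\to0$ at $\partial\Om$.

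The hard part will be checking the hypotheses of Theorem~\ref{berezin} in the bounded symmetric domain case: one must exhibit the transitive automorphism group of $\Om$ as the underlying group $G$, verify the covariance \eqref{invip} for the normalized reproducing kernel frame, and confirm that analyticity makes the Berezin transform injective. The remaining ingredient that is not purely formal is the a priori $p$-weak localization of the kernel frame itself, on which the membership of $S$ in the algebra of $p$-weakly localized operators (rather than merely its norm closure) depends; this is the domain-dependent input and is where the estimates discussed in the Bergman subsection are used.
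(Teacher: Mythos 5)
Your proposal is correct and follows what is essentially the paper's (implicit) argument: reduce compactness of $H_u$ to that of the positive operator $H_u^*H_u$, use the identity $H_u^*H_u=T_{|u|^2}-T_{\bar u}T_u$ together with the multiplier proposition and Proposition~\ref{algebra} to place it in the algebra of $p$-weakly localized operators, and then invoke Corollary~\ref{norm compact} for the first equivalence and Theorem~\ref{berezin} (via Corollary~\ref{bergman}) for the Berezin-transform version. The only remark worth adding is that for the ``in particular'' part the positivity of $S=H_u^*H_u$ gives $\norm{Sk_z}^2=\ip{S^2k_z}{k_z}\leq\norm{S}\ip{Sk_z}{k_z}$, so the Berezin criterion follows from the first equivalence without needing the injectivity hypothesis of Theorem~\ref{berezin}.
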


\subsection{de~Branges space} Let $E$ be a Hermite-Biehler entire function, i.e., entire function with no zeros in $\C_+$ such that $\abs{E(\bar{z})}<\abs{E(z)}$ for all $z\in \C_+$. Each such function generates a de~Branges space $\BB_E$ consisting of all entire functions $F$ such that both $F(z)/E(z)$ and $\overline{F(\bar{z})}/E(z)$ belong in the Hardy space $\HH^2(\C_+)$. It is well known that each de~Branges space is a reproducing kernel Hilbert space when equipped with the norm $\norm{F}_{\BB_E}:=\norm{F/E}_{L^2(\R)}$. The most famous example is the Paley-Wiener space which is obtained when $E(z)=e^{-i\pi z}$. On the real line $E$ can be represented as $E(x)=\abs{E(x)}e^{-i\phi(x)}$, where $\phi(x)$ is the increasing branch of the argument of the unimodular function $E(x)/\abs{E(x)}$ on the real line. This function $\phi$ is called the phase function.

As in the Bergman space, the set of normalized reproducing kernels $\{k^E_x\}_{x\in\R}$ forms a Parseval continuous frame in $\BB_E$. Here the indexing metric measure space is the real line equipped with the metric $d(x,y)=\abs{\phi(x)-\phi(y)}$ and the measure $d\la(x)=\phi'(x)dx$. This metric space is clearly locally compact and complete. Furthermore, $\la(D(x,r))=r$ for all $x\in\R, r>0$. The remaining properties that we need to be able to apply our results are this metric space to have finite asymptotic dimension and the normalized reproducing kernels to be weakly $p$-localized for some weight $p$. These requirements are more interesting and subtle and remain to be understood. We just mention that some covering results in~\cite{MNO} seem to be closely connected to the finite asymptotic dimension property that we require.

The class of multiplier operators on de~Branges spaces again coincides with the corresponding class of Toeplitz operators. They are closely related to the class of truncated Toeplitz operators~\cite{Bar, GR}.  It is known that the normalized reproducing kernels in the Paley-Wiener space are not $p$-weakly localized no matter which weight $p$ we choose. There are however results close in spirit to ours in which additional conditions are used to derive the boundedness and compactness of the corresponding Toeplitz operators~\cite{Ro, Sm}.

\section*{Acknowledgments} We would like to thank the two anonymous referees for their comments and suggestions.

\begin{bibdiv} \begin{biblist}

\bib{AAG}{article}{
   author={Ali, S. T.},
   author={Antoine, J-P.},
   author={Gazeau, J-P.}
   title={Continuous frames in Hilbert space},
   journal={Ann. Physics},
   volume={222},
   date={1993},
   number={1},
   pages={1--37}
}

\bib{A}{article}{
   author={Axler, S.},
   title={The Bergman space, the Bloch space, and commutators of multiplication operators},
   journal={Duke Math. J.},
   volume={53},
   date={1986},
   number={2},
   pages={315--332}
}

\bib{AZ}{article}{
   author={Axler, S.},
   author={Zheng, D.},
   title={Compact operators via the Berezin transform},
   journal={Indiana Univ. Math. J.},
   volume={47},
   date={1998},
   number={2},
   pages={387--400}
}

\bib{BCHL1}{article}{
  author={Balan, R.} 
  author={Casazza, P. G.} 
  author={Heil, C.}
  author={Landau, Z.},
  title={Density, overcompleteness, and localization of frames. I. Theory},
  journal={J. Fourier Anal. Appl.},
  volume={12},
  number={2},
  pages={105--143},
  year={2006},
  publisher={Springer}
}

\bib{BCHL2}{article}{
  author={Balan, R.} 
  author={Casazza, P. G.} 
  author={Heil, C.}
  author={Landau, Z.},
  title={Density, overcompleteness, and localization of frames. II. Gabor Systems},
  journal={J. Fourier Anal. Appl.},
  volume={12},
  number={3},
  pages={307--344},
  year={2006},
  publisher={Springer}
}

\bib{Bal}{article}{
  author={Balazs, P.}
  author={Bayer, D.}
  author={Rahimi, A.},
  title={Multipliers for continuous frames in Hilbert spaces},
  journal={J. Phys. A},
  volume={45},
  number={24},
  pages={244023},
  year={2012},
  publisher={IOP Publishing}
}

\bib{Bal1}{article}{
  author={Balazs, P.},
  title={Basic definition and properties of Bessel multipliers},
  journal={J. Math. Anal. Appl.},
  volume={325},
  number={1},
  pages={571--585},
  year={2007},
}

\bib{BaBo}{article}{
   author={Balogh, Z.},
   author={Bonk, M.},
   title={Gromov hyperbolicity and the Kobayashi metric on strictly pseudoconvex domains},
   journal={Comment. Math. Helv.},
   volume={75},
   date={2000},
   number={3},
   pages={504--533}
}

\bib{Bar}{article}{
   author={Baranov, A.},
   author={Chalendar, I.},
   author={Fricain, E.},
   author={Mashreghi, J.},
   author={Timotin, D.},
   title={Bounded symbols and Reproducing Kernel Thesis for truncated Toeplitz operators},
   journal={J. Funct. Anal.},
   volume={259},
   date={2010},
   number={10},
   pages={2673-2701}
   
}

\bib{BI}{article}{
   author={Bauer, W.},
   author={Isralowitz, J.},
   title={Compactness characterization of operators in the Toeplitz algebra
   of the Fock space $F^p_\alpha$},
   journal={J. Funct. Anal.},
   volume={263},
   date={2012},
   number={5},
   pages={1323--1355}}

\bib{BG}{article}{
   author={Bayer, D.},
   author={Gr{\"o}chenig, K.},
   title={Time-frequency localization operators and the Berezin transform},
   journal={Integral Equations Operator Theory},
   volume={82},
   date={2015},
   pages={95--117}}
   
\bib{BCZ}{article}{
   author={Berger, C. A.},
   author={Coburn, L. A.},
   author={Zhu, K.},
   title={Function theory on Cartan domains and the Berezin-Toeplitz symbol calculus},
   journal={Amer. J. Math.},
   volume={110},
   date={1988},
   number={5},
   pages={921-953}
   
}

\bib{Ber1}{article}{
   author={Berezin, F. A.},
   title={Quantization},
   journal={Math. USSR Izvestia},
   volume={8},
   date={1974},
   pages={1109--1163}}

\bib{Ber2}{article}{
   author={Berezin, F. A.},
   title={Quantization in complex symmetric spaces},
   journal={Math. USSR Izvestia},
   volume={9},
   date={1975},
   pages={341--379}}

\bib{BC}{article}{
   author={Boggiatto, P.},
   author={Cordero, E.},
   title={Anti-Wick quantization with symbols in $L^p$ spaces.},
   journal={Proc. Amer. Math. Soc.},
   volume={130},
   date={2002},
   number={9},
   pages={2679--2685}}

\bib{Chr}{book}{
  title={Lectures on singular integral operators},
  author={Christ, M.},
  volume={77},
  year={1991},
  publisher={American Mathematical Society}
}   

\bib{CM}{book}{
  title={Calderon-Zygmund and multilinear operators},
  author={Coifman, R.},
  author={Meyer, Y.}
  volume={48},
  year={2000},
  series={CBMS Regional Conference Series in Mathematics}
  publisher={Cambridge University Press}
}

\bib{CG1}{article}{
  author={Cordero, E.},
  author={Gr{\"o}chenig, K.}, 
  title={Time--frequency analysis of localization operators},
  journal={J. Funct. Anal.},
  volume={205},
  number={1},
  pages={107--131},
  year={2003},
  publisher={Elsevier}
}  

\bib{CG2}{article}{
  author={Cordero, E.},
  author={Gr{\"o}chenig, K.},
  title={Necessary conditions for Schatten class localization operators},
  journal={Proc. Amer. Math. Soc.},
  volume={133},
  number={12},
  pages={3573--3579},
  year={2005}
}

\bib{CS}{article}{
   author={{\v{C}}u{\v{c}}kovi{\'{c}}, {\v{Z}}.},
   author={Sahuto{\v{g}}lu, S.}
   title={Axler-Zheng type theorem for a class of domains in $\C^n$},
   journal={Integral Equations Operator Theory},
   volume={77},
   date={2013},
   number={3},
   pages={397--405}
}

\bib{E2}{article}{
  author={Engli{\v{s}}, M.}
  title={Berezin quantization and reproducing kernels on complex domains},
  journal={Trans. Amer. Math. Soc.},
  volume={348},
  number={2},
  date={1996},
  pages={411--479},
}

\bib{E}{article}{
   author={Engli{\v{s}}, M.},
   title={Compact Toeplitz operators via the Berezin transform on bounded
   symmetric domains},
   journal={Integral Equations Operator Theory},
   volume={33},
   date={1999},
   number={4},
   pages={426--455}
}

\bib{E1}{article}{
  author={Engli{\v{s}}, M.}
  title={Toeplitz operators and group representations},
  journal={J. Fourier Anal. Appl.},
  volume={13},
  number={3},
  date={2007},
  pages={243--265},
}

\bib{EHT}{article}{
  author={Engli{\v{s}}, M.}
  author={Hanninen, T.}
  author={Taskinen, J.}
  title={Minimal $L^\infty$-type spaces on strictly pseudo convex domains on which the Bergman projection is bounded},
  journal={Houston J. Math.},
  volume={32},
  number={1},
  date={2006},
  pages={253--275},
}

\bib{EZ}{article}{
  author={Engli{\v{s}}, M.}
  author={Zhang, G.}
  title={On the Faraut-Koranyi hypergeometric functions in rank two},
  journal={Ann. Inst. Fourier},
  volume={54},
  number={6},
  date={2004},
  pages={1855--1875},
}

\bib{FK}{article}{
  author={Faraut, J.},
  author={Koranyi, A.},
  title={Function spaces and reproducing kernels on bounded symmetric domains},
  journal={J. Funct. Anal.},
  volume={11},
  pages={245--287},
  year={2005}
}

\bib{FG1}{article}{
  author={Feichtinger, H.},
  author={Gr{\"o}chenig, K.},
  title={Banach spaces related to integrable group representations and their atomic decompositions, I.},
  journal={J. Funct. Anal.},
  volume={86},
  number={2},
  pages={307--340},
  year={1989}
}

\bib{FG2}{article}{
  author={Feichtinger, H.},
  author={Gr{\"o}chenig, K.},
  title={Banach spaces related to integrable group representations and their atomic decompositions, Part II.},
  journal={Monatsh. Math.},
  volume={108},
  number={2-3},
  pages={129--148},
  year={1989}
}

\bib{FN}{article}{
  author={Feichtinger, H.},
  author={Nowak, K.},
  title={A first survey of Gabor multipliers},
  journal={Advances in Gabor analysis},
  pages={99--128},
  year={2003}
  publisher={Birkhauser}
}

\bib{ForRud}{article}{
   author={Forelli, F.},
   author={Rudin, W.},
   title={Projections on spaces of holomorphic functions in balls},
   journal={Indiana Univ. Math. J.},
   volume={24},
   date={1974},
   number={6},
   pages={593--602}
}

\bib{FR}{article}{
  author={Fornasier, M.},
  author={Rauhut, H.},
  title={Continuous frames, function spaces, and discretization problem},
  journal={J. Fourier Anal. Appl.},
  volume={88},
  number={3},
  pages={64--89},
  year={1990}
}

\bib{FJW}{book}{
  title={Littlewood-Paley theory and the study of function spaces},
  author={Frazier, M.},
  author={Jawerth, B.},
  author={Weiss, G.}
  volume={79},
  year={1991},
  series={CBMS Regional Conference Series in Mathematics}
  publisher={Cambridge University Press}
}

\bib{Fut}{article}{
   author={Futamura, F.},
   title={Localizable operators and the construction of localized frames},
   journal={Proc. Amer. Math. Soc.},
   volume={137},
   date={2009},
   number={12},
   pages={4187--4197}
}

\bib{GR}{article}{
  author={Garcia, S. R.},
  author={Ross, W. T.},
  title={Recent progress on truncated Toeplitz operators},
  booktitle={Blaschke products and their applications},
  pages={275--319},
  year={2013},
  publisher={Springer}
}

\bib{GT}{article}{
   author={Grafakos, L.},
   author={Torres, R.}
   title={Discrete decompositions for bilinear operators and almost diagonal conditions},
   journal={Trans. Amer. Math. Soc.},
   volume={354},
   date={2002},
   number={3},
   pages={1153--1176}}

\bib{Gro1}{article}{
   author={Gr{\"o}chenig, K.},
   title={Localization of frames, Banach frames, and the invertibility of the frame operator},
   journal={J. Fourier Anal. Appl.},
   volume={10},
   date={2004},
   number={},
   pages={105--132}}
  
\bib{Gro2}{article}{
   author={Gr{\"o}chenig, K.},
   title={Time-frequency analysis of Sj{\"o}strand's class},
   journal={Rev. Mat. Iberoam.},
   volume={22},
   date={2006},
   number={2},
   pages={703--724}}

\bib{GH}{article}{
  author={Gr{\"o}chenig, K},
  author={Heil, C.},
  title={Modulation spaces and pseudodifferential operators},
  journal={ Integral Equations Operator Theory},
  volume={34},
  number={4},
  pages={439--457},
  year={1999}
}

\bib{GP}{article}{
  author={Gr{\"o}chenig, K},
  author={Piotrowski, M.},
  title={Molecules in coorbit spaces and boundedness of operators},
  journal={Studia Math.},
  volume={192},
  number={1},
  pages={61--77},
  year={2009}
}   

\bib{GrR}{article}{
  author={Gr{\"o}chenig, K},
  author={Rzeszotnik, Z.},
  title={Banach algebras of pseudodifferential operators and their almost diagonalization},
  journal={Ann. Inst. Fourier (Grenoble)},
  volume={58},
  number={7},
  pages={2279--2314},
  year={2008}
}

\bib{GS}{article}{
  author={Gr{\"o}chenig, K},
  author={Strohmer, T.},
  title={Pseudodifferential operators on locally compact abelian groups and Sj{\"o}strand's symbol class},
  journal={J. Reine Angew. Math.},
  volume={2007},
  number={613},
  pages={121--146},
  year={2007}
}

\bib{Gr}{book}{  
  author={Gromov, M.}, 
  title={Asymptotic invariants for infinite groups. Geometric group theory, Vol. 2},
  volume={182},
  year={1993},
  series={London Mathematical Society Lecture Notes},
  publisher={Cambridge University Press}
}

\bib{GMP1}{article}{
  author={Grossman, A},
  author={Morlet, J},
  author={Paul, T},
  title={Transforms associated to square integrable group representations I},
  journal={J. Math. Phys.},
  volume={26},
  date={1985}
  pages={2473--2479}
}   

\bib{GMP2}{article}{
  author={Grossman, A.},
  author={Morlet, J.},
  author={Paul, T.},
  title={Transforms associated to square integrable group representations II},
  journal={Ann. Inst. Henri Poincare, Phys. Theorique},
  volume={45},
  date={1986}
  pages={293--309}
}

\bib{Hut}{article}{  
  author={Hutn{\'\i}k, O.}, 
  title={On boundedness of Calder{\'o}n--Toeplitz operators},
  journal={Integral Equations Operator Theory},
  volume={70},
  number={4},
  pages={583--600},
  date={2011},

}

\bib{I}{article}{
author={Isralowitz, J.}
title={Compactness and essential norm properties of operators on generalized Fock spaces},
journal={J. Operator Theory},
volume={73},
 number={2},
pages={281--314},
 year={2015}

}

\bib{IMW}{article}{
author={Isralowitz, J.}
author={Mitkovski, M.},
author={Wick, B. D.},
title={Localization and Compactness in Bergman and Fock Spaces},
eprint={},
status={to appear in Indiana Univ. Math. J.},
date={2014}
pages={1--18}
}

\bib{MNO}{article}{
   author={Marzo, J.},
   author={Nitzan, S.},
   author={Olsen, J.},
   title={Sampling and interpolation in de Branges spaces with doubling phase},
   journal={J. Anal. Math.},
   volume={117},
   date={2012},
   number={1},
   pages={365--395}
   
}

\bib{Mey}{article}{
   author={Meyer, Y.},
   title={Les nouveaux op\'{e}rateurs de Calder\'{o}n-Zygmund},
   journal={Ast\'{e}risque},
   volume={131},
   date={1985},
   pages={237--254}
}

\bib{MW}{article}{
   author={Mitkovski, M.},
   author={Wick, B. D.},
   title={A Reproducing Kernel Thesis for Operators on Bergman-type Function Spaces},
   journal={J. Funct. Anal.},
   volume={267},
   date={2014},
   pages={2028--2055}
}

\bib{MSW}{article}{
   author={Mitkovski, M.},
   author={Su{\'a}rez, D.},
   author={Wick, B. D.},
   title={The Essential Norm of Operators on $A^p_\alpha(\mathbb{B}_n)$},
   journal={Integral Equations Operator Theory},
   volume={75},
   date={2013},
   number={2},
   pages={197--233}
}

\bib{Now}{article}{
  author={Nowak, K.},
  title={On Calder{\'o}n-Toeplitz operators},
  journal={Monatsh.  Math.},
  volume={116},
  number={1},
  pages={49--72},
  year={1993},
  publisher={Springer}
}

\bib{Ro}{article}{
  author={Rochberg, R.},
  title={Toeplitz and Hankel operators on the Paley-Wiener space},
  journal={Integral Equations Operator Theory},
  volume={10},
  date={1987},
  number={2},
  pages={187--235},
}

\bib{Ro1}{article}{
  author={Rochberg, R.},
  title={Eigenvalue estimates for Calder{\'o}n-Toeplitz operators},
  journal={Lecture Notes in Pure and Appl. Math},
  volume={136},
  date={1992}
  pages={345--357},
  
}

\bib{Roe}{article}{
   author={Roe, J.},
   title={Hyperbolic groups have finite asymptotic dimension},
   journal={Proc. Amer. Math. Soc.},
   volume={133},
   date={2005},
   number={9},
   pages={2489--2490}
}

\bib{Sm}{article}{
   author={Smith, M.},
   title={The reproducing kernel thesis for Toeplitz operators on the Paley-Wiener space},
   journal={Integral Equations Operator Theory },
   volume={49},
   date={2004},
   number={1},
   pages={111--122}
}

\bib{Tor}{book}{
   author={Torres, R.},
   title={Boundedness results for operators with singular kernels on distribution spaces},
   volume={442},
   year={1991},
   series={Mem. Amer. Math. Soc.}
   publisher={American Mathematical Society}
}

\bib{Sua}{article}{
   author={Su{\'a}rez, D.},
   title={The essential norm of operators in the Toeplitz algebra on $A^p(\mathbb{B}_n)$},
   journal={Indiana Univ. Math. J.},
   volume={56},
   date={2007},
   number={5},
   pages={2185--2232}
}

\bib{Str}{article}{
   author={Struble, R. A.},
   title={Metrics in locally compact groups},
   journal={Compos. Math.},
   volume={28},
   date={1974},
   number={},
   pages={217--222}
}

\bib{Tes}{article}{
   author={Tessera, R.},
   title={The inclusion of the Schur algebra in $B(2)$ is not inverse-closed},
   journal={Monatsh. Math.},
   volume={164},
   date={2011},
   number={1},
   pages={115--118}
}

\bib{Wong}{book}{
  title={Wavelet transforms and localization operators},
  author={Wong, M.},
  volume={136},
  year={2002},
  publisher={Springer}
}

\bib{XZ}{article}{
   author={Xia, J.},
   author={Zheng, D.},
   title={Localization and Berezin transform on the Fock space},
   journal={J. Funct. Anal.},
   volume={264},
   date={2013},
   number={1},
   pages={97--117}
}

\bib{YSY}{book}{
   author={Yuan, W.},
   author={Sickel, W.},
   author={Yang, D.}
   title={Morrey and Campanato meet Besov, Lizorkin and Triebel},
   volume={2005}
   year={2010}
   series={Lecture Notes in Mathematics}
   publisher={Springer}
}

\bib{Zhu1}{book}{
   author={Zhu, K.}
   title={Operator Theory in Function Spaces},
   volume={138}
   year={2007}
   series={Mathematical Surveys and Monographs}
   publisher={American Mathematical Society}
}

\bib{Zhu2}{book}{
   author={Zhu, K.}
   title={Analysis on Fock Spaces},
   volume={263}
   year={2012}
   series={Graduate Texts in Mathematics}
   publisher={Springer}
}

\bib{Zim}{article}{
author={Zimmer, A. M.}
title={Gromov hyperbolicity and the Kobayashi metric on convex domains of finite type},
eprint={http://arxiv.org/abs/1405.2858},
status={preprint},
date={2014}
pages={1--58}
}

\end{biblist} \end{bibdiv}

\end{document}